\newtheorem{theorem}[proposition]{Theorem}
\newtheorem{proposition}{Proposition}[section]
\newtheorem{corollary}[proposition]{Corollary}
\newtheorem{lemma}[proposition]{Lemma}
\newcommand{\eqref}[1]{(\ref{#1})}
\newcommand{\pp}{\mbox{$\mathbf p$}}
\begin{document}
\begin{frontmatter}

\title{Theory of self-learning $Q$-matrix}
\runtitle{Theory of self-learning $Q$-matrix}

\begin{aug}
\author{\fnms{Jingchen} \snm{Liu}\thanksref{e1}\ead[label=e1,mark]{jcliu@stat.columbia.edu}},
\author{\fnms{Gongjun} \snm{Xu}\corref{}\thanksref{e2}\ead[label=e2,mark]{gongjun@stat.columbia.edu}} \and
\author{\fnms{Zhiliang} \snm{Ying}\thanksref{e3}\ead[label=e3,mark]{zying@stat.columbia.edu}}
\runauthor{J. Liu, G. Xu and Z. Ying} 
\address{Department of Statistics, Columbia University, 1255 Amsterdam
Avenue, New York, NY
10027, USA. \printead{e1,e2,e3}}
\end{aug}

\received{\smonth{3} \syear{2011}}
\revised{\smonth{11} \syear{2011}}

%
\begin{abstract}
Cognitive assessment is a growing area in psychological and
educational measurement, where tests are given to assess
mastery/deficiency of attributes or skills. A key issue is the
correct identification of attributes associated with items in a
test. In this paper, we set up a mathematical framework under which
theoretical properties may be discussed. We establish sufficient
conditions to ensure that the attributes required by each item are
learnable from the data.
\end{abstract}

\begin{keyword}
\kwd{classification model}
\kwd{cognitive assessment}
\kwd{consistency}
\kwd{diagnostic}
\kwd{$Q$-matrix}
\kwd{self-learning}
\end{keyword}

\end{frontmatter}

\section{Introduction}\label{SecIntro}

Cognitive diagnosis has recently gained prominence in educational
assessment, psychiatric evaluation, and many other disciplines. A
key task is the correct specification of item-attribute
relationships. A widely used mathematical formulation is the well
known $Q$-matrix \cite{Tatsuoka1983}. Under the setting of the
$Q$-matrix, a typical modeling approach
assumes a latent variable structure in which each subject possesses
a vector of $k$ attributes and responds to $m$ items. The so-called
$Q$-matrix is an $m\times k$ binary matrix establishing the
relationship between responses and attributes by indicating the
required attributes for each item. The entry in the $i$th row and
$j$th column indicates if item $i$ requires attribute $j$ (see
Example~\ref{SecEx} for a demonstration of a $Q$-matrix). A short list of
further developments of cognitive diagnosis models (CDMs) based on
the $Q$-matrix includes the rule space method \cite{Tatsuoka1985,Tatsuoka},
the reparameterized unified/fusion model (RUM) \cite
{DiBello,Hartz,He}, the conjunctive (noncompensatory) DINA and NIDA models
\cite{Junker,TatsuokaC,dela,Templin2006,dela2008}, the
compensatory DINO and NIDO models \cite{Templin,Templin2006},
the attribute
hierarchy method \cite{AHM}, and clustering methods \cite{Chiu}; see
also \cite{Junker0,von,Rupp} for more approaches to cognitive
diagnosis.


Statistical analysis
with CDMs typically
assumes a known $Q$-matrix provided by experts such as those who
developed the questions \cite{Rupp2002,Henson,RoussosTH,Stout2007}.
Such a priori knowledge when correct is certainly
very helpful for both model estimation and eventually identification
of subjects' latent attributes. On the other hand, model fitting is
usually sensitive to the choice of $Q$-matrix and its
misspecification could seriously affect the goodness of fit. This is
one of the main sources for lack of fit. Various diagnostic tools
and testing procedures have been developed \cite{Rupp20082,dela2,Henson0,Liu,He2}. A~comprehensive review of
diagnostic classification
models can be found in \cite{Rupp2008}.

Despite the importance of the $Q$-matrix in cognitive diagnosis, its
estimation problem is largely an unexplored area. Unlike typical
inference problems, the inference for the $Q$-matrix is particularly
challenging for the following reasons. First, in many cases, the
$Q$-matrix is simply nonidentifiable. One typical situation is that
multiple $Q$-matrices lead to an identical response distribution.
Therefore, we only expect to identify the $Q$-matrix up to some
equivalence relation (Definition~\ref{DefEq}). In other words, two
$Q$-matrices in the same equivalence class are not distinguishable
based on data. Our first task is to define a meaningful and
identifiable equivalence class. Second, the $Q$-matrix lives on a
discrete space -- the set of $m\times k$ matrices with binary
entries. This discrete nature makes analysis particularly difficult
because calculus tools are not applicable. In fact, most analyses
are combinatorics based. Third, the model makes explicit
distributional assumptions on the (unobserved) attributes, dictating
the law of observed responses. The dependence of responses on
attributes via $Q$-matrix is a highly nonlinear discrete function.
The nonlinearity also adds to the difficulty of the analysis.

The primary purpose of this paper is to provide theoretical analyses
on the learnability of the underlying $Q$-matrix. In particular, we
obtain definitive answers to the identifiability of $Q$-matrix for
one of the most commonly used models -- the DINA model -- by
specifying a set of sufficient conditions under which the $Q$-matrix
is identifiable up to an explicitly defined equivalence class. We
also present the corresponding consistent estimators. We believe
that the results (especially the intermediate results) and analysis
strategies can be extended to other conjunctive models
\cite{Maris,Junker,Templin2006,Templin,HRHT}.

The rest of this paper is organized as follows. In Section~\ref{SecDINA}, we present the basic inference result for
$Q$-matrices in a conjunctive model with no slipping or guessing. In
addition, we introduce all the necessary terminologies and technical
conditions. In Section~\ref{SecProb}, we extend the results in
Section~\ref{SecDINA} to the DINA model with known slipping and
guessing parameters. In Section~\ref{SecSlip}, we further generalize
the results to the DINA model with unknown slipping parameters.
Further discussion is provided in Section~\ref{SecDis}. Proofs are
given in Section~\ref{SecProof}. Lastly, the proofs of two key
propositions are given in the \hyperref[app]{Appendix}.

\section{Model specifications and basic results}\label{SecDINA}

We start the discussion with a simplified situation, under which the
responses depend on the attribute profile deterministically (with no
uncertainty). We describe our estimation procedure under this simple
scenario. The results for the general cases are given in Sections~\ref{SecProb} and~\ref{SecSlip}.

\subsection{Basic model specifications}\label{SecModel}

The model specifications consist of the following concepts.

\textit{Attributes}: subject's (unobserved) mastery of certain
skills. Consider that there are $k$ attributes. Let $\mathbf A =
(A^1,\ldots,A^k)^\top$ be the vector of attributes and $A^j\in\{0,1\}$
be the indicator of the presence or absence of the $j$th
attribute.\eject

\textit{Responses}: subject's binary responses to items. Consider
that there are $m$ items. Let $\mathbf R=(R^1,\ldots,R^m)^\top$ be the
vector of responses and $R^i\in\{0,1\}$ be the response to the
$i$th item.

Both $\mathbf A$ and $\mathbf R$ are subject specific. We assume
that the integers $m$ and $k$ are known.

\textit{$Q$-matrix}: the link between item responses and attributes.
We define an $m\times k$ matrix $Q=(Q_{ij})_{m\times k}$. For each
$i$ and $j$, $Q_{ij}=1$ when item $i$ requires attribute $j$ and $0$
otherwise.

Furthermore, we define
%
\begin{equation}\label{Ability}
\xi^i = \prod_{j=1}^k (A^j)^{Q_{ij}} = {\mathbf1} (A^j\geq Q_{ij}\dvtx j
=1,\ldots,k),
\end{equation}
which indicates whether a subject with attribute $\mathbf A$ is
capable of providing a positive response to item $i$. This model is
conjunctive, meaning that it is necessary and sufficient to master
all the necessary skills to be capable of solving one problem.
Possessing additional attributes does not compensate for the absence
of necessary attributes. In this section, we consider the simplest
situation that there is no uncertainty in the response, that is,
%
\begin{equation}\label{Perf}
R^i = \xi^i
\end{equation}
for $i=1,\ldots,m$. Therefore, the responses are completely determined
by the attributes. We assume that all items require at least one
attribute. Equivalently, the $Q$-matrix does not have zero row
vectors. Subjects who do not possess any attribute are not capable
of responding positively to any item.

We use subscripts to indicate different subjects. For instance,
$\mathbf R_r = (R^1_r,\ldots,R^m_r)^\top$ is the response vector of
subject $r$. Similarly, $\mathbf A_r$ is the attribute vector of
subject $r$. We observe $\mathbf R_1,\ldots,\mathbf R_N$, where we use
$N$ to denote sample size. The attributes $\mathbf A_r$ are not
observed. Our objective is to make inference on the $Q$-matrix based
on the observed responses.

\subsection{Estimation of $Q$-matrix}

We first introduce a few quantities for the presentation of an
estimator.

\subsubsection*{$T$-matrix}
In order to provide an estimator of $Q$, we
first introduce one central quantity, the $T$-matrix, which connects
the $Q$-matrix with the response and attribute distributions. Matrix
$T(Q)$ has $2^{k}-1$ columns each of which corresponds to one
nonzero attribute vector, $\mathbf A\in\{0,1\}^{k}\setminus
\{(0,\ldots,0)\}$. Instead of labeling the columns of $T(Q)$ by ordinal
numbers, we label them by binary vectors of length $k$. For
instance, the $\mathbf A$th column of $T(Q)$ is the column that
corresponds to attribute $\mathbf A$, for all $\mathbf A\neq
(0,\ldots,0)$.

Let $I_i$ be a generic notation for positive responses to item $i$.
Let ``$\wedge$'' stand for ``and'' combination. For instance,
$I_{i_1}\wedge I_{i_2}$ denotes positive responses to both items
$i_1$ and $i_2$. Each row of $T(Q)$ corresponds to one item or one
``and'' combination of items, for instance, $I_{i_1}$,
$I_{i_1}\wedge I_{i_2}$ or $I_{i_1}\wedge I_{i_2}\wedge I_{i_3}$,
\ldots. If $T(Q)$ contains all the single items and all ``and''
combinations, $T(Q)$ contains $2^m-1$ rows. We will later say that
such a $T(Q)$ is \emph{saturated} (Definition~\ref{DefSat} in
Section~\ref{SecRes1}).

We now describe each row vector of $T(Q)$. We define that
$B_{Q}(I_{i})$ is a $2^{k}-1$ dimensional row vector. Using the same
labeling system as that of the columns of $T(Q)$, the $\mathbf A$th
element of $B_{Q}(I_{i})$ is defined as
$\prod_{j=1}^{k}(A^{j})^{Q_{ij}}$, which indicates if a subject with
attribute $\mathbf A$ is able to solve item $i$.

Using a similar notation, we define that
%
\begin{equation}\label{ProdB}
B_{Q}(I_{i_{1}}\wedge\cdots\wedge I_{i_{l}})=\Upsilon
_{h=1}^{l}B_{Q}(I_{i_{h}}),
\end{equation}
where the operator ``$\Upsilon_{h=1}^{l}$'' is element-by-element
multiplication from $B_Q(I_{i_1})$ to $B_Q(I_{i_l})$. For instance,
\[
W=\Upsilon_{h=1}^l V_h
\]
means that $W^j = \prod_{h=1}^l V_{h}^j$, where
$W=(W^1,\ldots,W^{2^k-1})$ and $V_h=(V_h^1,\ldots,V_h^{2^k-1})$.
Therefore, $B_Q(I_{i_1}\wedge\cdots\wedge I_{i_l})$ is the vector
indicating the attributes that are capable of responding positively
to items $i_1,\ldots,i_l$. The row in $T(Q)$ corresponding to
$I_{i_{1}}\wedge\cdots\wedge I_{i_{l}}$ is $B_{Q}(I_{i_{1}}\wedge
\cdots\wedge I_{i_{l}})$.

\subsubsection*{$\alpha$-vector}
We let $\alpha$ be a column vector the
length of which equals to the number of rows of $T(Q)$. Each element
of $\alpha$ corresponds to one row vector of $T(Q)$. The element in
$\alpha$ corresponding to $I_{i_{1}}\wedge\cdots\wedge I_{i_{l}}$ is
defined as $N_{I_{i_{1}}\wedge\cdots\wedge I_{i_{l}}}/N$, where
$N_{I_{i_{1}}\wedge\cdots\wedge I_{i_{l}}}$ denotes the number of
people who have positive responses to items $i_1,\ldots,i_l$, that is
\[
N_{I_{i_{1}}\wedge\cdots\wedge I_{i_{l}}}= \sum_{r=1}^N
I(R_r^{i_j}=1\dvtx j=1,\ldots,l).
\]

For each $\mathbf A\in\{0,1\}^k$, we let
%
\begin{equation}\label{phat}\hat p_{\mathbf A} = \frac1 N\sum
_{r=1}^N I(\mathbf A_r = \mathbf A).
\end{equation}
If \eqref{Perf} is strictly respected, then
%
\begin{equation}\label{id}
T(Q)\hat{\mathbf p} = \alpha,
\end{equation}
where $\hat{\mathbf p}= (\hat p_{\mathbf A}\dvtx  \mathbf A\in\{0,1\}^k
\setminus \{(0,\ldots,0)\})$ is arranged in the same order as the
columns of $T(Q)$. This is because each row of $T(Q)$ indicates the
attribute profiles corresponding to subjects capable of responding
positively to that set of item(s). Vector $\hat{\mathbf p}$ contains
the proportions of subjects with each attribute profile. For each
set of items, matrix multiplication sums up the proportions
corresponding to each attribute profile capable of responding
positively to that set of items, giving us the total proportion of
subjects who respond positively to the corresponding items.

\subsubsection*{The estimator of the $Q$-matrix}
 For each $m\times k$
binary matrix $Q'$, we define
%
\begin{equation}\label{basic}
S(Q')=\inf_{\mathbf p\in[0,1]^{2^k-1}}|T(Q')\mathbf p-\alpha|,
\end{equation}
where $\mathbf p =(p_{\mathbf A}\dvtx  \mathbf A \neq(0,\ldots,0))$. The
above minimization is subject to the constraint that $\sum_{\mathbf
A \neq\mathbf(0,\ldots,0)}p_{\mathbf A} \in[0,1]$. $|\cdot|$ is the
Euclidean distance. An estimator of $Q$ can be obtained by
minimizing $S(Q')$,
%
\begin{equation}\label{EstVan}
\hat Q = \arg\inf_{Q'} S(Q'),
\end{equation}
where ``$\arg\inf$'' is the minimizer of the minimization problem
over all $m\times k$ binary matrices. Note that the minimizers are
not unique. We will later prove that the minimizers are in the same
meaningful equivalence class. Because of \eqref{id}, the true
$Q$-matrix is always among the minimizers because $S(Q)=0$.

\subsection{Example}\label{SecEx}

We illustrate the above construction by one simple example. We
emphasize that this example is discussed to explain the estimation
procedure for a concrete and simple example. The proposed estimator
is certainly able to handle much larger $Q$-matrices. We consider the
following $3\times2$ $Q$-matrix,
%
\begin{equation}\label{Q}
Q=\,
\begin{tabular}{@{}lll@{}}
\hline
& Addition & Multiplication\\
\hline
$2+3$ & $1$ & $0$ \\
$5\times2$ & $0$ & $1$ \\
$(2+3)\times2$ & $1$ & $1$\\
\hline
\end{tabular}
\end{equation}
There are two attributes and three items. We consider the
contingency table of attributes,\vspace*{6pt}
\begin{center}
\begin{tabular}{@{}lll@{}}
\hline
& \multicolumn{2}{l@{}}{Multiplication} \\
\hline
\multirow{2}{*}{Addition} & $\hat p_{00}$ & $\hat p_{01}$
\\
& $\hat p_{10}$ & $\hat p_{11}$
\\
\hline
\end{tabular} \vspace*{6pt}
\end{center}
In the above table, $\hat p_{00}$ is the proportional of people who
do not master either addition or multiplication. Similarly, we
define $\hat p_{01}$, $\hat p_{10}$ and $\hat p_{11}$. $\{\hat
p_{ij}; j=0,1\}$ is not observed.

Just for illustration, we construct a simple nonsaturated $T$-matrix.
Suppose the relationship in (\ref{Perf}) is strictly respected. Then,
we should be able to establish the following identities:
%
\begin{equation}\label{margin}
N(\hat p_{10}+\hat p_{11})=N_{I_{1}},\qquad N(\hat p_{01}+\hat
p_{11})=N_{I_{2}},\qquad N\hat p_{11}=N_{I_{3}}.
\end{equation}
Therefore, if we let $\hat{\mathbf p}=(\hat p_{10},\hat p_{01},\hat
p_{11})$, the above display imposes three linear constraints on the
vector $\hat{\mathbf p}$. Together with the natural constraint that
$\sum_{ij}\hat p_{ij}=1$, $\hat{\mathbf p}$ solves linear equation,
%
\begin{equation}\label{Linear}
T(Q)\hat{\mathbf p}=\alpha,
\end{equation}
subject to the constraints that $\hat{\mathbf p}\in\lbrack
0,1]^{3}$ and $\hat p_{10}+ \hat p_{01}+\hat p_{11}\in[0,1]$, where
%
\begin{equation}\label{Num}
T(Q)= \pmatrix{
1 & 0 & 1 \cr
0 & 1 & 1 \cr
0 & 0 & 1
} ,\qquad \alpha= \pmatrix{
N_{I_{1}}/N \cr
N_{I_{2}}/N \cr
N_{I_{3}}/N
} .
\end{equation}
Each column of $T(Q)$ corresponds to one attribute profile. The
first column corresponds to $\mathbf A=(1,0)$, the second column to
$\mathbf A=(0,1)$, and the third column to $\mathbf A=(1,1)$. The
first row corresponds to item $2+3$, the second row to $5\times2$
and the last row to $(2+3)\times2$. For this particular situation,
$T(Q)$ has full rank and there exists one unique solution to
\eqref{Linear}. In fact, we would not expect the constrained
solution to the linear equation in \eqref{Linear} to always exist
unless \eqref{Perf} is strictly followed. This is the topic of the
next section.

The identities in (\ref{margin}) only consider the marginal rate of
each question. There are additional constraints if one considers
``combinations'' among items. For instance,
\[
N\hat p_{11}=N_{I_{1}\wedge I_{2}}.
\]
People who are able to solve problem 3 must have both attributes and
therefore are able to solve both problems 1 and 2. Again, if
\eqref{Perf} is not strictly followed, this is not necessarily
respected in the real data, though it is a logical conclusion. The DINA
in the next section handles such a case. Upon
considering $I_1$, $I_2$, $I_3$ and $I_1 \wedge I_2$, the new
$T$-matrix is
%
\begin{equation}\label{NewT}
T(Q)= \pmatrix{
1 & 0 & 1 \cr
0 & 1 & 1 \cr
0 & 0 & 1 \cr
0 & 0 & 1%
} ,\qquad \alpha= \pmatrix{
N_{I_{1}}/N \cr
N_{I_{2}}/N \cr
N_{I_{3}}/N \cr
N_{I_{1}\wedge I_{2}}/N%
} .
\end{equation}
The last row is added corresponding to $I_1 \wedge I_2$. With
\eqref{Perf} in force, we have
%
\begin{equation}
S(Q)=\inf_{\mathbf p\in[0,1]^3}|T(Q)\mathbf p-\alpha| = |T(Q)\hat
{\mathbf p}-\alpha|=0,
\end{equation}
if $Q$ is the true matrix.

\subsection{Basic results}\label{SecRes1}

Before stating the main result, we provide a list of notations,
which will be used in the discussions.
\begin{itemize}
\item Linear space spanned by vectors $V_1,\ldots,V_l$:
\[
\mathcal L (V_1,\ldots,V_l)= \Biggl\{\sum_{j=1}^l a_j V_j\dvtx  a_j \in\mathbb
R \Biggr\}.
\]

%
\item For a matrix $M$, $M_{1:l}$ denotes the submatrix containing the
first $l$ rows and all columns of~$M$.
\item Vector $e_i$ denotes a column vector such that the $i$th element
is one and the rest are zero. When there is no ambiguity, we omit the
length index of $e_i$.
\item Matrix $\mathcal I_l$ denotes the $l\times l$ identity matrix.
\item For a matrix $M$, $C(M)$ is the linear space generated by the
column vectors of $M$. It is usually called the \emph{column space} of $M$.
\item$C_M$ denotes the set of column vectors of $M$.
\item$R_M$ denotes the set of row vectors of $M$.
\item Vector $\mathbf0$ denotes the zero vector, $(0,\ldots,0)$. When
there is no ambiguity, we omit the index of length.
\item Scalar $p_{\mathbf A}$ denotes the probability that a subject has
attribute profile $\mathbf A$. For instance, $p_{10}$ is the
probability that a subject has attribute one but not attribute two.
\item Define a $2^k-1$ dimensional vector
\[
\mathbf p= (p_{\mathbf A}\dvtx  \mathbf A \in\{0,1\}^k\setminus \{\mathbf
0\} ).
\]

%
\item Let $c$ and $g$ be two $m$ dimensional vectors. We write $c\succ
g$ if $c_i> g_i$ for all $1\leq i\leq m$.
\item We write $c\ncong g$ if $c_i \neq g_i$ for all $i=1,\ldots,m$.
\item Matrix $Q$ denotes the true matrix and $Q'$ denotes a generic
$m\times k$ binary matrix.
\end{itemize}

The following definitions will be used in subsequent discussions.

\begin{definition}\label{DefSat}
We say that $T(Q)$ is \emph{saturated} if all combinations of form
$I_{i_1}\wedge\cdots\wedge I_{i_l}$, for $l=1,\ldots,m$, are
included in
$T(Q)$.
\end{definition}

\begin{definition}\label{DefEq}
We write $Q\sim Q'$ if and only if $Q$ and $Q'$ have identical
column vectors, which could be arranged in different orders;
otherwise, we write $Q\nsim Q'$.
\end{definition}

\begin{remark}
It is not hard to show that ``$\sim$'' is an \emph{equivalence
relation}. $Q\sim Q'$ if and only if they are identical after an
appropriate permutation of the columns. Each column of $Q$ is
interpreted as an attribute. Permuting the columns of $Q$ is
equivalent to relabeling the attributes. For $Q\sim Q'$, we are not
able to distinguish $Q$ from $Q'$ based on data.
\end{remark}

\begin{definition}\label{DefComp}
A $Q$-matrix is said to be \emph{complete} if
$\{e_i\dvtx i=1,\ldots,k\}\subset R_Q$ ($R_Q$ is the set of row vectors of
$Q$); otherwise, we say that $Q$ is \emph{incomplete}.
\end{definition}

A $Q$-matrix is complete if and only if for each attribute there
exists an item only requiring that attribute. Completeness implies
that $m\geq k$. We will show that completeness is among the
sufficient conditions to identify $Q$.

\begin{remark}
One of the main objectives of cognitive assessment is to identify
the subjects' attributes; see \cite{Rupp2008} for other
applications. It has been established in \cite{Chiu} that the
completeness of the $Q$-matrix is a sufficient and necessary condition
for a set of items to consistently identify attributes if \eqref{Perf}
is strictly followed.
Thus, it is usually recommended to use a complete $Q$-matrix. For a
precise formulation, see \cite{Chiu}.
\end{remark}



Listed below are assumptions which will be used in subsequent
development.
\begin{enumerate}[C3]
\item[C1] $Q$ is complete.
\item[C2] $T(Q)$ is saturated.
\item[C3] $\mathbf A_1,\ldots,\mathbf A_N$ are i.i.d. random vectors
following distribution
\[
P(\mathbf A_r=\mathbf A) = p^*_{\mathbf A}.
\]
\end{enumerate}

We further let $\mathbf p^* = (p^*_{\mathbf A}\dvtx  \mathbf A \in\{0,1\}
\setminus \{\mathbf0\})$.
\begin{enumerate}[C5]
\item[C4] $(p_{\mathbf0}^*, \mathbf p^*) \succ\mathbf0$.
\item[C5] Each attribute has been required by at least two items.
\end{enumerate}

With these preparations, we are ready to introduce the first
theorem, the proof of which is given in Section~\ref{SecProof}.

\begin{theorem}\label{ThmVan}
Assume that conditions \textup{C1}--\textup{C5} are in force. Suppose that for subject
$r$ the response corresponding to item $i$ follows
\[
R^i_r=\xi^i_r = \prod_{j=1}^k (A^j_r)^{Q_{ij}}.
\]
Let $\hat Q$, defined in \eqref{EstVan}, be a minimizer of $S(Q')$
among all $m \times k$ binary matrices, where $S(Q')$ is defined in
\eqref{basic}. Then,
%
\begin{equation}\label{EstQ}\lim_{N\rightarrow\infty}P(\hat Q\sim Q)=1.
\end{equation}
Further, let
%
\begin{equation}\label{Estp}\tilde{\mathbf p} = \arg\inf_{\mathbf
p} |T(\hat Q)\mathbf p -\alpha|^2.
\end{equation}
With an appropriate rearrangement of the columns of $\hat Q$, for
any $\varepsilon>0$
\[
\lim_{N\rightarrow\infty}P( |\tilde{\mathbf p} - \mathbf p^*|\leq
\varepsilon)=1.
\]
\end{theorem}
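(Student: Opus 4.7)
The plan is to reduce the theorem to a population-level identifiability statement, then pass to the sample via a standard uniform-convergence argument over the finite collection of candidate matrices.

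First I would apply the strong law of large numbers to the i.i.d.\ attributes $\mathbf A_1,\ldots,\mathbf A_N$: by the deterministic relation $R_r^i=\xi_r^i$, the empirical counts $N_{I_{i_1}\wedge\cdots\wedge I_{i_l}}/N$ converge in probability to $P(\xi^{i_1}=\cdots=\xi^{i_l}=1)$, so that $\alpha\to\alpha^*:=T(Q)\mathbf p^*$. Define the population criterion
\[
S^*(Q'):=\inf_{\mathbf p\in\Delta}|T(Q')\mathbf p-\alpha^*|,\qquad \Delta=\Bigl\{\mathbf p\in[0,1]^{2^k-1}:\textstyle\sum_{\mathbf A}p_{\mathbf A}\le 1\Bigr\}.
\]
Because $\Delta$ is compact and the objective is $1$-Lipschitz in $\alpha$, $|S(Q')-S^*(Q')|\le|\alpha-\alpha^*|$, giving uniform convergence $S(Q')\to S^*(Q')$ in probability over the finite set of all $m\times k$ binary matrices.

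The core of the argument is to show that $S^*(Q')=0$ if and only if $Q'\sim Q$. The ``if'' direction is immediate because relabeling attributes preserves the response law. For the ``only if'' direction I would argue that any $\mathbf p'\in\Delta$ with $T(Q')\mathbf p'=T(Q)\mathbf p^*$ forces $Q'\sim Q$. Here completeness C1 is essential: the rows of $T(Q)$ corresponding to the $e_j$-items and, by saturation C2, their $\wedge$-combinations give, via inclusion--exclusion, a linear bijection between $\mathbf p^*$ and the portion of $\alpha^*$ read off from those items. Combined with C4 (every profile appears with positive probability), this identifies the column-indicator structure of $Q$, so $Q'$ must agree with $Q$ up to a permutation of columns. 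I expect this combinatorial step to be the main obstacle and to rely on the two key propositions the authors defer to the appendix; without calculus, the proof must combine the rank-$(2^k-1)$ structure of $T(Q)$ with a case analysis on how columns of $Q'$ can match those of $Q$.

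Granted that identifiability, the rest is routine. Since the set of non-equivalent $Q'$ is finite, $\delta:=\min_{Q'\nsim Q}S^*(Q')>0$, and by uniform convergence $\inf_{Q'\nsim Q}S(Q')\ge\delta/2$ with probability tending to one, while $S(Q)=0$ by \eqref{id}. Hence any minimizer $\hat Q$ of $S$ satisfies $\hat Q\sim Q$ with probability tending to one, proving \eqref{EstQ}. For the consistency of $\tilde{\mathbf p}$, I would note that on the event $\hat Q\sim Q$, after the appropriate column rearrangement $T(\hat Q)=T(Q)$, and C1 together with C2 ensures $T(Q)$ has full column rank $2^k-1$ (the $e_j$-rows and their $\wedge$-combinations span $\mathbb R^{2^k-1}$ by inclusion--exclusion). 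The constrained least-squares problem \eqref{Estp} therefore has a unique solution $\tilde{\mathbf p}=(T(Q)^\top T(Q))^{-1}T(Q)^\top\alpha$, which by the continuous mapping theorem converges in probability to $(T(Q)^\top T(Q))^{-1}T(Q)^\top\alpha^*=\mathbf p^*$, yielding the second conclusion.
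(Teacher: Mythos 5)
Your scaffolding matches the paper's: pass from $\alpha$ to its population limit $\alpha^*=T(Q)\mathbf p^*$ by the law of large numbers, exploit the finiteness of the set of $m\times k$ binary matrices to turn pointwise separation into a uniform gap $\delta>0$ while $S(Q)=0$ exactly by \eqref{id}, and then use the full column rank of $T(Q)$ (Proposition \ref{PropRank}) together with the exact identity $T(Q)\hat{\mathbf p}=\alpha$ to identify $\tilde{\mathbf p}$ with $\hat{\mathbf p}$ on the event $\hat Q=Q$ and conclude consistency. Those steps are correct and are essentially what the paper does.

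The genuine gap is the step you yourself flag: the claim that $S^*(Q')>0$ whenever $Q'\nsim Q$. This is the entire mathematical content of the theorem, and your sketch does not establish it. The inclusion--exclusion observation only shows that, \emph{given the true} $Q$, the first $2^k-1$ rows of $T(Q)$ form an invertible block upper-triangular matrix, so $\mathbf p^*$ is recoverable from $\alpha^*$; it says nothing about why a \emph{different} matrix $Q'$ paired with a \emph{different} vector $\mathbf p'\in\Delta$ cannot reproduce the same $\alpha^*$. Ruling that out is precisely the content of Corollary \ref{CorRank}, i.e.\ of Propositions \ref{PropComp} and \ref{PropIncomp}: a case analysis on whether $Q'_{1:k}$ is complete exhibits a pair of rows $h,l$ that are forced to be identical in $T(Q')$ but differ in $T(Q)$, so that every vector of $C(T(Q'))$ has equal $h$-th and $l$-th entries while some column of $T(Q)$ does not; the paper then combines this with C4 ($\mathbf p^*\succ\mathbf 0$) to conclude $T(Q)\mathbf p^*\notin C(T(Q'))$ and hence a positive separation. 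Writing ``I expect this step to rely on the two key propositions in the appendix'' correctly locates the difficulty but leaves the theorem unproved: without that combinatorial argument (or an equivalent one), the proposal supplies only the routine surrounding machinery.
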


\begin{remark}If $Q_1 \sim Q_2$, the two matrices only differ by a
column permutation and will be considered to be the ``same''.
Therefore, we expect to identify the equivalence class that $Q$ belongs
to. Also, note that $S(Q_1)=S(Q_2)$ if $Q_1 \sim Q_2$.
\end{remark}

\begin{remark}
In order to obtain the consistency of $\hat Q$ (subject to a column
permutation), it is necessary to have $\mathbf p^*$ not living on
some sub-manifold. To see a counter example, suppose that $P(\mathbf
A_r = (1,\ldots,1)^\top)=p^*_{1\ldots1}=1$. Then, for all $Q$,
$P(\mathbf
R_r=(1,\ldots,1)^\top)=1$, that is, all subjects are able to solve all
problems. Therefore, the distribution of $\mathbf R$ is independent
of $Q$. In other words, the $Q$-matrix is not identifiable. More
generally, if there exit $A_r^i$ and $A_r^j$ such that
$P(A_r^i=A_r^j)=1$, then the $Q$-matrix is not identifiable based on
the data. This is because one cannot tell if an item requires
attribute $i$ alone, attribute $j$ alone, or both; see
\cite{Reckase,ReckaseMIRT} for similar cases for the multidimensional IRT
models.
\end{remark}

\begin{remark}
Note that the estimator of the attribute distribution, $\tilde
{\mathbf p}$, in \eqref{Estp} depends on the order of columns of
$\hat Q$. In order to achieve consistency, we will need to arrange
the columns of $\hat Q$ such that $\hat Q =Q$ whenever $\hat Q \sim
Q$.
\end{remark}

\begin{remark}\label{RemSplit}
One practical issue associated with the proposed procedure is the
computation. For a specific $Q$, the computation of $S(Q)$ only
involves a constraint minimization of a quadratic function. However,
if $m$ or $k$ is large, the computation overhead of searching the
minimizer of $S(Q)$ over the space of $m\times k$ matrices could be
substantial. One practical solution is to break the $Q$-matrix into
smaller sub-matrices. For instance, one may divide the $m$ items in
to $l$ groups (possibly with nonempty overlap across different
groups). Then apply the proposed estimator to each of the $l$ group
of items. This is equivalent to breaking a big $m$ by $k$ $Q$-matrix
into several smaller matrices and estimating each of them
separately. Lastly, combine the $l$ estimated sub-matrices together
to form a single estimate. The consistency results can be applied to
each of the $l$ sub-matrices and therefore the combined matrix is
also a consistent estimator. A similar technique has been discussed
in Chapter 8.6 of \cite{Tatsuoka}.
\end{remark}

\begin{remark}
Conditions C1 and C2 are imposed to guarantee consistency. They may not
be always necessary. Furthermore, constructing a saturated $T$-matrix
is sometimes computationally not feasible, especially when the number
of items is large. In practice, one may include the combinations of one
item, two items,\ \ldots, $j$ items. The choice of $j$ depends on the
sample size and the computation resources.
The condition C5 is required for technical purposes. Nonetheless, one
can in fact construct counterexamples, that is, the $Q$-matrix is not
identifiable up to the relationship ``$\sim$'', if C5 is violated.
\end{remark}


\section{DINA model with known slipping and guessing parameters}\label
{SecProb}

\subsection{Model specification}\label{SecF}

In this section, we extend the inference results in the previous
section to the situation under which the responses do not
deterministically depend on the attributes. In particular, we
consider the DINA (Deterministic Input, Noisy Output ``AND'' gate)
model \cite{Junker}.
We would like to introduce two parameters: the slipping parameter
($s_i$) and the guessing parameter $(g_i)$. Here $1-s_i$ ($g_i$) is
the probability of a subject's responding positively to item $i$
given that s/he is capable of solving that problem. To simplify the
notations, we denote $1-s_i$ by $c_i$. An extension of \eqref{Perf}
to include slipping and guessing specifies the response
probabilities as
%
\begin{equation}\label{Prob}P(R^i = 1|\xi^i) = c_i^{\xi^i}
g_i^{1-\xi^i},
\end{equation}
where $\xi^i$ is the capability indicator defined in
\eqref{Ability}. In addition, conditional on $\{\xi^1,\ldots,\xi^m\}$,
$\{R^1,\ldots,R^m\}$ are jointly independent.


In this context, the $T$-matrix needs to be modified accordingly.
Throughout this section, we assume that both $c_i$'s and $g_i$'s are
known. We discuss the case that $c_i$'s are unknown in the next
section.

We first consider the case that $g_i = 0$ for all $i=1,\ldots,m$.
We introduce a diagonal matrix $D_{c}$. If the $h$th row of matrix
$T_{c}(Q)$ corresponds to $I_{i_{1}}\wedge\cdots\wedge I_{i_{l}}$,
then the $h$th diagonal element of $D_{c}$ is $c_{i_{1}}\times
\cdots\times c_{i_{l}}$. Then, we let
%
\begin{equation}\label{As}
T_{c}(Q)=D_{c}T(Q),
\end{equation}
where $T(Q)$ is the binary matrix defined previously. In other
words, we multiply each row of $T(Q)$ by a common factor and obtain
$T_{c}(Q)$. Note that in absence of slipping ($c_i=1$ for each $i$)
we have that $T_c(Q)= T(Q)$.

There is another equivalent way of constructing $T_{c}(Q)$. We define%
\[
B_{c,Q}(I_{j})=c_{j}B_{Q}(I_{j})
\]
and%
%
\begin{equation}\label{bs}
B_{c,Q}(I_{i_{1}}\wedge\cdots\wedge I_{i_{l}})=\Upsilon
_{h=1}^{l}B_{c,Q}(I_{i_{h}}),
\end{equation}
where ``$\Upsilon$'' refers to element by element multiplication.
Let the row vector in $T_{c}(Q)$ corresponding to $I_{i_{1}}\wedge
\cdots\wedge I_{i_{l}}$ be $B_{c,Q}(I_{i_{1}}\wedge\cdots\wedge
I_{i_{l}})$.

For instance, with $c=(c_1,c_2,c_3)$, the $T_c(Q)$ corresponding to
the $T$-matrix in \eqref{NewT} would be
%
\begin{equation}\label{AsEx}
T_{c}(Q)= \pmatrix{
c_{1} & 0 & c_{1} \cr
0 & c_{2} & c_{2} \cr
0 & 0 & c_{3} \cr
0 & 0 & c_{1}c_{2}%
} .
\end{equation}

Lastly, we consider the situation that both the probability of
making a mistake and the probability of guessing correctly could be
strictly positive. By this, we mean that the probability that a
subject responds positively to item $i$ is $c_{i}$ if s/he is
capable of doing so; otherwise the probability is $g_{i} $. We
create a corresponding $T_{c,g}(Q)$ by slightly modifying
$T_{c}(Q)$. We define row vector
\[
\mathbb{E}=(1,\ldots,1).
\]
When there is no ambiguity, we omit the length index of
$\mathbb{E}$.
Now, let%
\[
B_{c,g,Q}(I_{i})=g_{i}\mathbb{E}+(c_{i}-g_{i})B_{Q}(I_{i})
\]
and%
%
\begin{equation}\label{bsg}
B_{c,g,Q}(I_{i_{1}}\wedge\cdots\wedge I_{i_{l}})=\Upsilon
_{h=1}^{l}B_{c,g,Q}(I_{i_{h}}).
\end{equation}
Let the row vector in $T_{c,g}(Q)$ corresponding to $I_{i_{1}}\wedge
\cdots\wedge I_{i_{l}}$ be $B_{c,g,Q}(I_{i_{1}}\wedge\cdots\wedge
I_{i_{l}})$. For instance, the matrix $T_{c,g}$ corresponding to the
$T_{c}(Q)$ in \eqref{AsEx} is
%
\begin{equation}\label{AsgEx}
T_{c,g} (Q)= \pmatrix{
c_{1} & g_{1} & c_{1} \cr
g_{2} & c_{2} & c_{2} \cr
g_{3} & g_{3} & c_{3} \cr
c_{1}g_{2} & g_{1}c_{2} & c_{1}c_{2}%
} .
\end{equation}
%




\subsection{Estimation of the $Q$-matrix and consistency results}

Having concluded our preparations, we are now ready to introduce our
estimators for $Q$. Given $c$ and $g$, we define
%
\begin{equation}\label{Score}S_{c,g}(Q)=\inf_{\mathbf p'\in
[0,1]^{2^k-1}} |T_{c,g}(Q)\mathbf p'+p_{\mathbf0}'\mathbf g-\alpha|,
\end{equation}
where $\mathbf p'=(p'_{\mathbf A}\dvtx  \mathbf A \in\{0,1\}^k \setminus
\{\mathbf 0\})$, $p'_{\mathbf0 }= p'_{0\ldots0}$ and
%
\begin{equation}
\label{g}
\mathbf{g}= \pmatrix{
g_{1} \cr
\vdots \cr
g_{k} \cr
g_{1}g_{2} \cr
\vdots \cr
g_{k-1}g_{k} \cr
g_{1}g_{2}g_{3} \cr
\vdots
}
\begin{array}{l}
I_{1} \\
\vdots \\
I_{k} \\
I_{1}\wedge I_{2} \\
\vdots \\
I_{k-1}\wedge I_{k} \\
I_{1}\wedge I_{2}\wedge I_{3} \\
\vdots
\end{array}
\end{equation}
The labels to the right of the vector indicate the corresponding row
vectors in $T_{c,g}(Q)$. The minimization in \eqref{Score} is
subject to constraints that
\[
p'_{\mathbf A} \in[0,1]
\quad\mbox{and}\quad \sum_{\mathbf A}p_{\mathbf A}'=1.
\]
The vector $\mathbf
g$ contains the probabilities of providing positive responses to
items simply by guessing. We propose an estimator of the $Q$-matrix
through a minimization problem, that is,
%
\begin{equation}\label{Est}
\hat Q(c,g) = \arg\inf_{Q'} S_{c,g}(Q').
\end{equation}
We write $c$ and $g$ in the argument to emphasize that the estimator
depends on $c$ and $g$. The computation of the minimization in
\eqref{Score} consists of minimizing a quadratic function subject to
finitely many linear constraints. Therefore, it can be done
efficiently.\looseness=1

\begin{theorem}\label{ThmProb}
Suppose that $c$ and $g$ are known and that conditions \textup{C1}--\textup{C5} are in
force. For subject $r$, the responses are generated independently
such that
%
\begin{equation}\label{RProb}P(R_r^i = 1|\xi^i_r) = c_i^{\xi^i_r}
g_i^{1-\xi^i_r},
\end{equation}
where $\xi^i_r$ is defined as in Theorem~\ref{ThmVan}.
Let $\hat Q (c,g)$ be defined as in \eqref{Est}. If $c_i \neq g_i$ for
all $i$
and $T_{c-g}(Q)\pp^{*}$ does not have zero elements, then
\[
\lim_{N\rightarrow\infty}P\bigl(\hat Q(c,g) \sim Q \bigr)=1.
\]
%
Furthermore, let
\[
\tilde{\mathbf p}(c,g) = \arg\inf_{\mathbf p} |T_{c,g}(\hat
Q(c,g))\mathbf p+ p_{\mathbf0} \mathbf g-\alpha|^2,
\]
subject to constraint that $\sum_{\mathbf A} p_{\mathbf A}=1$. Then,
with an appropriate rearrangement of the columns of $\hat Q$, for
any $\varepsilon>0$,
\[
\lim_{N\rightarrow\infty}P\bigl(|\tilde{\mathbf p}(c,g)-\mathbf p^*|\leq
\varepsilon\bigr)=1.
\]
%
\end{theorem}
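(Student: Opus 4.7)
The plan is to reduce to the deterministic case of Theorem~\ref{ThmVan} by exploiting $c_i\neq g_i$, which makes the passage from latent capability moments to observed response moments linearly invertible. The first step is to identify the almost-sure limit of $\alpha$: each entry $N_{I_{i_1}\wedge\ldots\wedge I_{i_l}}/N$ is an empirical mean, so by the strong law of large numbers it converges to $P(R^{i_1}=\cdots=R^{i_l}=1)$. Using (\ref{RProb}) together with conditional independence of the $R^{i_h}$ given the capability indicators, this limit equals $\sum_{\mathbf A}p^*_{\mathbf A}\prod_h[g_{i_h}+(c_{i_h}-g_{i_h})\xi^{i_h}(\mathbf A)]$, which by the construction in (\ref{bsg}) is precisely the $(I_{i_1}\wedge\ldots\wedge I_{i_l})$-th entry of $T_{c,g}(Q)\mathbf p^*+p^*_{\mathbf 0}\mathbf g$ (the $\mathbf A=\mathbf 0$ contribution produces exactly $p^*_{\mathbf 0}\mathbf g$, since no item is solvable without attributes under the assumption that $Q$ has no zero row). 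Hence $\alpha\to\alpha^*:=T_{c,g}(Q)\mathbf p^*+p^*_{\mathbf 0}\mathbf g$ almost surely, and plugging $\mathbf p'=\mathbf p^*$, $p'_{\mathbf 0}=p^*_{\mathbf 0}$ into (\ref{Score}) immediately gives $S_{c,g}(Q)\to 0$.

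The main step, and the expected obstacle, is the identifiability claim: for every $Q'\nsim Q$ there is a $\delta(Q')>0$ with $\liminf_N S_{c,g}(Q')\geq\delta(Q')$, equivalently $\alpha^*\notin\{T_{c,g}(Q')\mathbf p'+p'_{\mathbf 0}\mathbf g:(p'_{\mathbf 0},\mathbf p')\text{ a valid probability vector}\}$. For any $Q'$ and any valid $(\mathbf p',p'_{\mathbf 0})$, expanding the product by multilinearity rewrites the $(I_{i_1}\wedge\ldots\wedge I_{i_l})$-th coordinate of $T_{c,g}(Q')\mathbf p'+p'_{\mathbf 0}\mathbf g$ as
\[
\sum_{S\subseteq\{1,\ldots,l\}}\Bigl(\prod_{h\in S}(c_{i_h}-g_{i_h})\Bigr)\Bigl(\prod_{h\notin S}g_{i_h}\Bigr)\,\mathbb E_{Q',\mathbf p'}\!\Bigl[\prod_{h\in S}\xi^{i_h}\Bigr].
\]
Viewed as a linear map from the vector of deterministic capability moments $\mathbb E[\prod_{h\in S}\xi^{i_h}]$ (indexed by subsets of items) to $\alpha^*$, this is triangular under set inclusion with invertible diagonal entries $\prod_{h}(c_{i_h}-g_{i_h})\neq 0$; a M\"obius-type induction on $|S|$ therefore recovers the full deterministic moment vector from $\alpha^*$. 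Applying this inversion both to $(Q,\mathbf p^*)$ and to any hypothetical matching $(Q',\mathbf p')$ gives $T(Q')\mathbf p'=T(Q)\mathbf p^*$ with both probability vectors valid, contradicting the deterministic-case identifiability at the core of Theorem~\ref{ThmVan} unless $Q'\sim Q$. Since the candidate set of $m\times k$ binary matrices is finite and $S_{c,g}(Q')$ is continuous in $\alpha$, combining with the first step yields $P(\hat Q\sim Q)\to 1$.

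For the consistency of $\tilde{\mathbf p}(c,g)$, condition on the high-probability event $\{\hat Q\sim Q\}$ and arrange columns so that $\hat Q=Q$; then $\tilde{\mathbf p}$ minimizes $|T_{c,g}(Q)\mathbf p+p_{\mathbf 0}\mathbf g-\alpha|^2$ over the probability simplex. Since $\alpha\to\alpha^*$, the limiting problem is minimized at $\mathbf p^*$, and uniqueness of that minimizer follows from the same M\"obius inversion combined with conditions C1 and C2 (which together ensure $T(Q)$ has full column rank $2^k-1$ via the zeta-matrix structure on the complete items), so $\mathbf p^*$ is uniquely pinned down. A standard compactness/continuity argument then delivers $\tilde{\mathbf p}(c,g)\to\mathbf p^*$ in probability. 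The crux of the whole argument is the M\"obius inversion step; the remaining work is finite combinatorics over binary matrices and the strong law of large numbers.
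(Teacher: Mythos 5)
Your proof is correct, and its engine --- the triangular (M\"obius) inversion that maps the observed $(c,g)$-moments back to the deterministic capability moments, with invertibility resting on $c_i\neq g_i$ --- is exactly the content of the paper's Proposition \ref{PropSG}: the matrix $D$ constructed there satisfies $D\tilde T_{c,g}(Q)=(\mathbf 0,\,T_{c_g}(Q))$ with $c_g=c-g$, which is your subset-indexed inversion written as row operations on $\tilde T_{c,g}$, and your absorption of the $S=\emptyset$ term into $p'_{\mathbf 0}\mathbf g$ via $\sum_{\mathbf A}p'_{\mathbf A}=1$ is the role of the appended row $(1,\mathbb E)$ there. Where you genuinely diverge is in what happens after inverting. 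Since $c$ is known you may set $c'=c$, divide out the common diagonal $\prod_{h\in S}(c_{i_h}-g_{i_h})$, and reduce any hypothetical match to the exact deterministic equation $T(Q')\mathbf p'=T(Q)\mathbf p^*$, which the identifiability step of Theorem \ref{ThmVan} (via Corollary \ref{CorRank}, $\mathbf p^*\succ\mathbf 0$, and compactness of the simplex) already forbids for $Q'\nsim Q$; this bypasses Propositions \ref{PropComp} and \ref{PropIncomp} as separate inputs at this stage. The paper instead keeps the comparison at the level of column spaces and shows that some column of $T_{c_g}(Q)$ (or the all-nonzero last column) lies outside $C(T_{c'_g}(Q'))$ \emph{uniformly over} $c'\in[0,1]^m$. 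For Theorem \ref{ThmProb} your route is more economical; what the paper's heavier formulation buys is precisely that uniformity in $c'$, which is what lets the same separation constant $\delta$ survive when the slipping parameters are profiled out in Theorem \ref{CorProb}. Your consistency argument for $\tilde{\mathbf p}(c,g)$ --- injectivity of $\mathbf p\mapsto T_{c,g}(Q)\mathbf p+p_{\mathbf 0}\mathbf g$ on the simplex from the same inversion plus the full column rank of $T(Q)$ guaranteed by Proposition \ref{PropRank} --- matches the paper's use of the full column rank of $\tilde T_{c,g}(Q)$.
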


\begin{remark}\label{RemDist}
There are various metrics one can employ to measure the distance
between the vectors $T_{c,g}(\hat Q(c,g))\mathbf p+ p_{\mathbf0}
\mathbf g$ and $\alpha$. In fact, any metric that generates the same
topology as the Euclidian metric is sufficient to obtain the
consistency results in the theorem. For instance, a principled
choice of objective function would be the likelihood with $\mathbf
p$ profiled out. The reason we prefer the Euclidian metric (versus,
for instance, the full likelihood) is that the evaluation of $S(Q)$
is easier than the evaluation based on other metrics. More
specifically, the computation of current $S(Q)$ consists of
quadratic programming types of well oiled optimization techniques.
\end{remark}


\section{Extension to the situation with unknown slipping
probabilities}\label{SecSlip}

In this section, we further extend our results to the situation
where the slipping probabilities are unknown and the guessing
probabilities are known. In the context of standard exams, the
guessing probabilities can typically be set to zero for open
problems. For instance, the chance of guessing the correct answer to
$``(3+2)\times2 =\, $?'' is very small. On the other hand, for
multiple choice problems, the guessing probabilities cannot be
ignored. In that case, $g_i$ can be considered as $1/n$ when there
are $n$ choices; see Remark~\ref{remark4.2} for more description.

\subsection{Estimator of $c$}\label{SecEst}

We provide two estimators of $c$ given $Q$ and $g$. One is
applicable to all $Q$-matrices, but computationally intensive. The
other is computationally easy, but requires certain structures of
$Q$. Then, we combine them into a single estimator.

\subsubsection*{A general estimator}
We first provide an estimator of $c$
that is applicable to all $Q$-matrices. Considering that the estimator
of $Q$ minimizes the objective function $S_{c,g}(Q)$, we propose the
following estimator of~$c$:
%
\begin{equation}\label{EstSGen}
\tilde c(Q,g) = \arg\inf_{c\in[0,1]^m} S_{c,g} (Q).
\end{equation}

\subsubsection*{A moment estimator}

The computation of $\tilde c(Q,g)$ is typically intensive. When the
$Q$-matrix has a certain structure, we are able to estimate $c$
consistently based on estimating equations.

For a particular item $i$, suppose that there exist items
${i_1},\ldots,{i_l}$ (different from $i$) such that
%
\begin{equation}\label{suff}B_Q(I_i \wedge I_{i_1}\wedge\cdots\wedge
I_{i_l})=B_Q(I_{i_1}\wedge\cdots\wedge I_{i_l}),
\end{equation}
that is, the attributes required by item $i$ are a subset of the
attributes required by ${i_1},\ldots,{i_l}$.


Let ${c-g} = (c_1-g_1,\ldots,c_m-g_m)$ and
\[
\tilde T_{c,g}(Q)= \pmatrix{
\mathbf g &T_{c,g}(Q) \cr
1& \mathbb{E}%
}.
\]
We borrow a result which will be given in the proof of
Proposition~\ref{PropSG} (Section~\ref{SecLem}) to say that there
exists a matrix $D$ (only depending on $g$) such that
\[
D\tilde T_{c,g}(Q) =(\mathbf0,T_{{c-g}}(Q)).
\]
Let $a_g$ and $a_{*g}$ be the row vectors in $D$ corresponding to
$I_{i_1}\wedge\cdots\wedge I_{i_i}$ and $I_i\wedge I_{i_1}\wedge
\cdots
\wedge I_{i_i}$ (in $T_{{c-g}}(Q)$).

Then, 
%
\begin{eqnarray}
\label{EQ}
\frac{a_{\ast g}^{\top} {
\alpha \choose
1%
} }{ a_{g}^{\top} {
\alpha \choose
1%
} }&=&\frac{a_{\ast g}^{\top}\tilde{T}_{c,g}(Q) {
p_{\mathbf{0}}^* \choose
\mathbf{p}^*%
} }{ a_{g}^{\top}\tilde{T}_{c,g}(Q) {
p_{\mathbf{0}}^* \choose
\mathbf{p}^*%
} }+\mathrm{o}_{p}(1) \nonumber
\\[-8pt]
\\[-8pt]
&=&\frac{B_{{c-g},Q}(I_{i}\wedge
I_{i_{1}}\wedge\cdots\wedge
I_{i_{l}})\mathbf{p}^*}{ B_{{c-g},Q}(I_{i_{1}}\wedge\cdots\wedge
I_{i_{l}})%
\mathbf{p}^*}+\mathrm{o}_{p}(1)\mathop{\rightarrow}^{p}(c_{i}-g_{i}),\nonumber
\end{eqnarray}
where the vectors $a_g$ and $a_{*g}$ only depend on $g$.

Therefore, the corresponding estimator of $c_i$ would be
%
\begin{equation}\label{Shat}
\bar c_i(Q,g) = g_i+ \frac{a^\top_{*g} {
\alpha\choose
1%
} }{a^\top_{g} {
\alpha\choose
1%
}} .
\end{equation}
Note that the computation of $\bar c_i(Q,g)$ only consists of affine
transformations and therefore is very fast.

\begin{proposition}\label{PropSHat}
Suppose conditions \textup{C3}, \eqref{RProb} and \eqref{suff} are true.
Then $\bar c_i \rightarrow c_i$ in probability as $N\rightarrow
\infty$.
\end{proposition}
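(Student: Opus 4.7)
The plan is to formalize the chain of equalities and limits already outlined in display \eqref{EQ}, breaking it into three parts: (i) a law-of-large-numbers statement for $\alpha$, (ii) application of the algebraic identity $D\tilde T_{c,g}(Q)=(\mathbf 0,\,T_{c_g}(Q))$ borrowed from Proposition \ref{PropSG}, and (iii) reduction via the subset condition \eqref{suff} to obtain the target limit $c_i-g_i$.

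First I would establish that each coordinate of $\alpha$ converges in probability to the corresponding coordinate of $T_{c,g}(Q)\mathbf p^* + p^*_{\mathbf 0}\,\mathbf g$. Under C3 the subjects are i.i.d., and under \eqref{RProb} with responses conditionally independent given the attributes, the probability that a single subject responds positively to items $i_1,\ldots,i_l$ equals $\sum_{\mathbf A} p^*_{\mathbf A}\prod_{h=1}^l\bigl[g_{i_h}+(c_{i_h}-g_{i_h})B_Q(I_{i_h})_{\mathbf A}\bigr]$, which by the definition in \eqref{bsg} is exactly the row of $T_{c,g}(Q)\mathbf p^* + p^*_{\mathbf 0}\mathbf g$ indexed by $I_{i_1}\wedge\cdots\wedge I_{i_l}$. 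The weak law applied to the indicator average $N_{I_{i_1}\wedge\cdots\wedge I_{i_l}}/N$ delivers the convergence. Combined with the identity $\sum_{\mathbf A}p^*_{\mathbf A}=1$, this is equivalent to $\begin{pmatrix}\alpha\\1\end{pmatrix}\xrightarrow{p}\tilde T_{c,g}(Q)\begin{pmatrix}p^*_{\mathbf 0}\\\mathbf p^*\end{pmatrix}$.

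Next I would multiply both sides on the left by $a_g^\top$ and $a_{*g}^\top$. By the defining property of $D$ supplied by Proposition \ref{PropSG}, these rows annihilate the first column of $\tilde T_{c,g}(Q)$ and reproduce, respectively, the rows $B_{c_g,Q}(I_{i_1}\wedge\cdots\wedge I_{i_l})$ and $B_{c_g,Q}(I_i\wedge I_{i_1}\wedge\cdots\wedge I_{i_l})$ of $T_{c_g}(Q)$. The hypothesis \eqref{suff} forces $B_Q(I_i)$ to be $1$ at every column at which $B_Q(I_{i_1}\wedge\cdots\wedge I_{i_l})$ is nonzero, so element-wise $B_{c_g,Q}(I_i\wedge I_{i_1}\wedge\cdots\wedge I_{i_l})=(c_i-g_i)\,B_{c_g,Q}(I_{i_1}\wedge\cdots\wedge I_{i_l})$. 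By Slutsky the ratio in \eqref{Shat} then converges in probability to $c_i-g_i$, and adding $g_i$ yields $c_i$.

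The only delicate point I anticipate is the nonvanishing of the denominator limit $B_{c_g,Q}(I_{i_1}\wedge\cdots\wedge I_{i_l})\mathbf p^*$, which is needed for the ratio to be well behaved. This is where the standing positivity assumption C4 together with $c_i\neq g_i$ enters: evaluating $B_{c_g,Q}(I_{i_1}\wedge\cdots\wedge I_{i_l})$ at the all-ones attribute profile gives $\prod_{h=1}^l(c_{i_h}-g_{i_h})\neq 0$, and C4 ensures $p^*_{(1,\ldots,1)}>0$, so the denominator stays bounded away from zero and the continuous mapping theorem closes the argument.
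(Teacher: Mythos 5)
Your proof is correct and follows essentially the same route as the paper's: the law of large numbers for $\alpha$, the identity $D\tilde T_{c,g}(Q)=(\mathbf 0,\,T_{c_g}(Q))$ borrowed from Proposition \ref{PropSG}, and the reduction via \eqref{suff} to the ratio limit $c_i-g_i$. Your additional verification that the denominator $B_{c_g,Q}(I_{i_1}\wedge\cdots\wedge I_{i_l})\mathbf p^*$ stays bounded away from zero (via C4 and the standing assumption $c_j\neq g_j$) addresses a point the paper's own proof leaves implicit, and is needed for the Slutsky step to go through.
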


\begin{pf}
By the law of large numbers,
\[
a_{\ast g}^{\top} \pmatrix{
\alpha \cr
1%
} - a_{\ast g}^{\top}\tilde{T}_{c,g}(Q) \pmatrix{
p_{\mathbf{0}}^* \vspace*{2pt}\cr
\mathbf{p}^*%
} \rightarrow0,\qquad a_{g}^{\top} \pmatrix{
\alpha \cr
1%
} - a_{ g}^{\top}\tilde{T}_{c,g}(Q) \pmatrix{
p_{\mathbf{0}}^* \vspace*{2pt}\cr
\mathbf{p}^*%
} \rightarrow0,
\]
in probability as $N\rightarrow\infty$. By
the construction of $a_{*g}$ and $a_{g}$, we have
\begin{eqnarray*}a_{\ast g}^{\top}\tilde{T}_{c,g}(Q) \pmatrix{
p_{\mathbf{0}}^* \vspace*{2pt}\cr
\mathbf{p}^*%
} &=& B_{{c-g},Q}(I_{i}\wedge I_{i_{1}}\wedge\cdots\wedge
I_{i_{l}})\mathbf{p}^*,\\
a_{g}^{\top}\tilde{T}_{c,g}(Q) \pmatrix{
p_{\mathbf{0}}^* \vspace*{2pt}\cr
\mathbf{p}^*%
} &=& B_{{c-g},Q}(I_{i_{1}}\wedge\cdots\wedge
I_{i_{l}})\mathbf{p}^*.
\end{eqnarray*}
Thanks to \eqref{suff}, we
have
\[
\frac{a_{\ast g}^{\top} {
\alpha \choose
1%
} }{ a_{g}^{\top} {
\alpha \choose
1%
}} \rightarrow c_i - g_i.
\]
\upqed\end{pf}

\subsubsection*{Combined estimator}
Lastly, we combine $\bar c_i$ and
$\tilde c_i$.
For each $Q$, we write $c=(c^*,c^{**})$. For each $c_i$ in the
sub-vector $c^*$, \eqref{suff} holds. Let $\bar c^*(Q,g)$ be defined
in \eqref{Shat} (element by element). For $c^{**}$, we let $\tilde
c^{**} (Q,g) = \arg\inf_{c^{**}} S_{(\bar c^*(Q,g),c^{**}),g} (Q)$.
Finally, let $\hat c(Q,g)= (\bar c^*(Q,g), \tilde c^{**} (Q,g))$.
Furthermore, each element of $\hat c(Q,g)$ greater than one is set
to be one and each element less than zero is set to be zero.
Equivalently, we impose the constraint that $\hat c(Q,g)\in
[0,1]^m$.

\subsection{Consistency result}

\begin{theorem}\label{CorProb}
Suppose that $g$ is known and the conditions in Theorem~\ref{ThmProb} hold. Let
\[
\hat Q_{\hat c}(g) = \arg\inf_{Q'} S_{\hat c (Q',g),g}(Q'),\qquad
\tilde{\mathbf p}_{\hat c}(g) = \arg\inf_{\mathbf p} \bigl|T_{\hat
c(\hat Q,g),g}(\hat Q_{\hat c}(g))\mathbf p+p_{\mathbf0}\mathbf
g-\alpha\bigr|.
\]
The second optimization is subject to constraint that
$\sum_{\mathbf A} p_{\mathbf A}=1$. Then,
\[
\lim_{N\rightarrow\infty}P \bigl(\hat Q_{\hat c}(g)\sim Q \bigr)=1.
\]
Furthermore, if the estimator $\tilde c(Q,g)$, defined in
\eqref{EstSGen}, is consistent, then by appropriately rearranging
the columns of $\hat Q_{\hat c}(g)$, for any $\varepsilon>0$,
\[
\lim_{N\rightarrow\infty}P \bigl(|\tilde{ \mathbf p}_{\hat c}(g)-\mathbf
p^*|\leq
\varepsilon\bigr)=1.
\]
\end{theorem}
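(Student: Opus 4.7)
The plan is to split the argument into two asymptotic facts about the map $Q' \mapsto S_{\hat c(Q',g),g}(Q')$: (a) at $Q' = Q$ the value tends to $0$ in probability, and (b) at any $Q' \nsim Q$ the value is bounded below by a positive constant with probability tending to $1$. Since the minimization runs over the finite collection of $m \times k$ binary matrices, (a) and (b) together force $\hat Q_{\hat c}(g) \sim Q$ eventually.

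For (a), Proposition \ref{PropSHat} gives $\bar c^*(Q,g) \to c^*$ in probability. Because $\tilde c^{**}(Q,g)$ is defined as the minimizer of the sub-problem, substituting the true $c^{**}$ gives $S_{\hat c(Q,g),g}(Q) \leq S_{(\bar c^*(Q,g),\, c^{**}),g}(Q)$; continuity in $c$ and the law of large numbers (using that $(p_{\mathbf 0}^*, \mathbf p^*)$ is feasible in the inner minimization defining $S_{c,g}(Q)$) send the right-hand side to $0$. For (b), I would reduce matters to a joint identifiability claim: no pair $(Q', c')$ with $Q' \nsim Q$ and $c' \in [0,1]^m$ can reproduce the population vector $\alpha^{\infty} := T_{c,g}(Q)\mathbf p^* + p_{\mathbf 0}^* \mathbf g$. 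Granted this, $\inf_{c', \mathbf p'}|T_{c',g}(Q')\mathbf p' + p'_{\mathbf 0}\mathbf g - \alpha^{\infty}| > 0$, and since $\hat c(Q',g)$ is only one particular choice of $c'$, the empirical $S_{\hat c(Q',g),g}(Q') \geq \inf_{c'} S_{c',g}(Q')$ is eventually strictly positive. The main obstacle is precisely this joint identifiability step: Theorem \ref{ThmProb} only treats the case of \emph{known} $c$, so a strictly stronger combinatorial argument -- exploiting completeness (C1), saturation (C2), and $c_i \neq g_i$ -- is required to rule out mimicking by an incorrect pair $(Q', c')$. This is presumably the content of Proposition \ref{PropSG} invoked in Section \ref{SecLem}, and I would build the argument on the column-space identities already used in the derivation of \eqref{EQ}.

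For the consistency of $\tilde{\mathbf p}_{\hat c}(g)$, once $\hat Q_{\hat c}(g) \sim Q$ I would permute its columns to match $Q$. Proposition \ref{PropSHat} gives $\bar c^*(\hat Q_{\hat c}(g), g) \to c^*$, and the assumed consistency of $\tilde c(Q,g)$ transfers by a standard profiling argument (the population minimizer in $c^{**}$, with $c^*$ frozen at its true value, is still the true $c^{**}$ by uniqueness of the joint minimizer) to $\tilde c^{**}(\hat Q_{\hat c}(g), g) \to c^{**}$, so $\hat c \to c$. Continuity gives $T_{\hat c,g}(\hat Q_{\hat c}(g)) \to T_{c,g}(Q)$ and $\alpha \to \alpha^{\infty}$, and the limiting linear system $T_{c,g}(Q)\mathbf p + p_{\mathbf 0}\mathbf g = \alpha^{\infty}$ has $\mathbf p^*$ as its unique solution in the simplex -- completeness and $c_i \neq g_i$ deliver this uniqueness, exactly as in the proof of Theorem \ref{ThmProb} -- hence $\tilde{\mathbf p}_{\hat c}(g) \to \mathbf p^*$ in probability.
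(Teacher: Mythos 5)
Your proposal is correct and follows essentially the same route as the paper: establish $S_{\hat c(Q,g),g}(Q)\to 0$ via Proposition \ref{PropSHat} and the definition of $\hat c$, obtain the separation for $Q'\nsim Q$ uniformly over $c'\in[0,1]^m$ from Proposition \ref{PropSG} exactly as in the proof of Theorem \ref{ThmProb} (whose bound is already $\inf_{c'}S_{c',g}(Q')>\delta/2$, so no new combinatorial argument is needed beyond that proposition), and conclude by finiteness of the set of binary matrices. The consistency argument for $\tilde{\mathbf p}_{\hat c}(g)$ via consistency of $\hat c$ and the full column rank of $\tilde T_{c,g}(Q)$ likewise matches the paper's.
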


\begin{remark}
The consistency of $\hat Q_{\hat c}(g)$ does not rely on the
consistency of $\tilde c(Q,g)$, which is mainly because of the
central intermediate result in Proposition~\ref{PropSG}. The
consistency of $\tilde c(Q,g)$ is a necessary condition for the
consistency of $\tilde{\mathbf p}_{\hat c}(g)$.

For most usual situations, $(\mathbf p^*, c)$ is estimable based on
the data given a correctly specified $Q$-matrix. Nonetheless, there
are some rare occasions in which nonidentifiability does exist. We
provide one example, explained at the intuitive level, to illustrate
that it is not always possible to consistently estimate $c$ and
$\mathbf p^*$. This example is simply to justify that the existence
of the consistent estimator for $c$ in the above theorem is not an
empty assumption. Consider a complete matrix $Q= \mathcal I_k$. The
degrees of freedom of a $k$-way binary table is $2^k-1$. On the
other hand, the dimension of parameters $(\mathbf p^*, c)$ is $2^k-1
+k$. Therefore, $\mathbf p^*$ and $c$ cannot be consistently
identified without additional information. This problem is typically
tackled by introducing addition parametric assumptions such as
$\mathbf p^*$ satisfying certain functional form or in the Bayesian
setting (weakly) informative prior distributions \cite{Gelman08}.
Given that the emphasis of this paper is the inference of
$Q$-matrix, we do not further investigate the identifiability of
$(\mathbf p^*, c)$. Nonetheless, estimation for $(\mathbf p^*, c)$
is definitely an important issue.
\end{remark}

\begin{remark}\label{remark4.2}
Assuming that the guessing probability $g_i$ being known is somewhat
strong. For complicated situations, such as for multiple choice
problems the incorrect choices do not look ``equally incorrect'',
the guessing probability is typically not $1/n$.
In Theorem~\ref{CorProb}, we
make this assumption mostly for technical reasons.

One can certainly provide the same treatment to the unknown guessing
probabilities just as to the slipping probabilities by plugging in a
consistent estimator of $g_i$ or profiling it out (like $\tilde c$).
However, the rigorous establishment of the consistency results is
certainly much more difficult and additional technical conditions may
be needed. We leave the analysis of the problem with unknown guessing
probability
to the future study.
\end{remark}
%

\section{Discussion}\label{SecDis}

This paper provides basic theoretical results of the estimation of
$Q$-matrix, a key element in modern cognitive diagnosis. Under the
conjunctive model assumption, sufficient conditions are developed
for the $Q$-matrix to be identifiable up to an equivalence relation
and the corresponding consistent estimators are constructed. The
equivalence relation defines a natural partition of the space of
$Q$-matrices and may be viewed as the finest ``resolution'' that is
possibly distinguishable based on the data, unless there is
additional information about the specific meaning of each attribute.
Our results provide the first steps for statistical inference about
$Q$-matrices by explicitly specifying the conditions under which two
$Q$-matrices lead to different response distributions. We believe
that these results, especially the intermediate results in Section~\ref{SecProof}, can also be applied to general conjunctive models.

There are several directions along which further exploration may be
pursued. First, some conditions may be modified to reflect practical
circumstance. For instance, if the population is not fully
diversified, meaning that certain attribute profiles may never
exist, then condition C4 cannot be expected to hold. To ensure
identifiability, we will need to impose certain structures on the
$Q$-matrix. In the addition-multiplication example of Section~\ref{SecEx}, if individuals capable of multiplication are also
capable of addition, then we may need to impose the natural
constraint that every item that requires multiplication should also
require addition, which also implies that the $Q$-matrix is never
complete.

Second, when an a priori ``expert's'' knowledge of the
$Q$-matrix is available, we may wish to incorporate such information
into the estimation. This could be in the form of an additive
penalty function attached to the objective function $S$. Such
information, if correct, not only improves estimation accuracy but
also reduces the computational complexity -- one can just perform a
minimization of $S(Q)$ in a neighborhood around the expert's
$Q$-matrix.

Third, throughout this paper we assume that the number of attributes
(dimension) is known. In practice, it would be desirable to develop
a data driven way to estimate the dimension, not only to deal with
the situation of unknown dimension, but also to check if the assumed
dimension is correct. One possible way to tackle the problem is to
introduce a penalty function similar to that of BIC \cite{Sch78}
which would give a consistent estimator of the $Q$-matrix even if
the dimension is unknown.

Fourth, one issue of both theoretical and practical importance is
the inference of the parameters additional to the $Q$-matrix, such
as the slipping ($s=1-c$), guessing ($g$) parameters and the
attribute distribution $\mathbf p^*$. In the current paper, given
that the main interesting parameter is the $Q$-matrix, the
estimations of $\mathbf p^*$ and $c$ are treated as by-product of
the main results. On the other hand, given a known $Q$, the
identifiability and estimation of these parameters are important
topics. In the previous discussion, we provided a few examples for
potential identifiability issues. Further careful investigation is
definitely of great importance and challenges.

Fifth, the rate of convergence of the estimator $\hat Q$
is not only of theoretical importance. From a practical point of few,
it is crucial to study the rate of convergence as the scale of the
problem becomes large in terms of the number of attributes and number of
items.

Lastly, the optimization of $S(Q)$ over the space of $m\times k$
binary matrices is a nontrivial problem. It consists of evaluating
the function $S$ $2^{m\times k}$ times. This is a substantial
computational load if $m$ and $k$ are reasonably large. As mentioned
previously, this computation might be reduced by additional
information about the $Q$-matrix or splitting the $Q$-matrix into
small sub-matrices. Nevertheless, it would be highly desirable to
explore the structures of the $Q$-matrix and the function $S$ so as
to compute $\hat Q$ more efficiently.

\section{Proofs of the theorems}\label{SecProof}

\subsection{Several propositions and lemmas}\label{SecLem}
To make the discussion smooth, we postpone several long proofs to the
\hyperref[app]{Appendix}.

\begin{proposition}\label{PropRank}
Suppose that $Q$ is complete and matrix $T(Q)$ is saturated. Then,
we are able to arrange the columns and rows of $Q$ and $T(Q)$ such
that $T(Q)_{1:(2^k-1)}$ has full rank and $T(Q)$ has full column
rank.
\end{proposition}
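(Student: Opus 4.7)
The plan is to exploit the completeness of $Q$ to extract a highly structured square submatrix of $T(Q)$ whose invertibility reduces to a classical fact about the incidence matrix of the Boolean lattice.

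First I would use completeness to pick, for each $j = 1,\ldots,k$, an item $i_j$ whose row in $Q$ equals $e_j^\top$; after permuting rows of $Q$ we may assume $i_j = j$. For any nonempty subset $J \subseteq \{1,\ldots,k\}$, saturation of $T(Q)$ guarantees that the row $B_Q(I_{J}) := B_Q\bigl(\bigwedge_{j\in J} I_{i_j}\bigr)$ is present in $T(Q)$; by the definition \eqref{ProdB}, its $\mathbf{A}$-th entry is
\[
\prod_{h\in J}\prod_{j=1}^{k}(A^{j})^{Q_{i_h,j}}=\prod_{j\in J}A^{j},
\]
which equals $1$ precisely when $A^{j}=1$ for every $j\in J$.

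Next I would relabel the columns of $T(Q)$ by nonempty subsets $K\subseteq\{1,\ldots,k\}$ via the correspondence $\mathbf{A}\leftrightarrow K=\{j:A^{j}=1\}$, and then rearrange rows of $T(Q)$ so that the first $2^{k}-1$ rows are exactly the rows $B_Q(I_J)$ for nonempty $J\subseteq\{1,\ldots,k\}$. From the computation above, the resulting $(2^{k}-1)\times(2^{k}-1)$ submatrix $M$ has entries
\[
M_{J,K}=\mathbf{1}(J\subseteq K).
\]
This is the zeta matrix of the Boolean lattice. Ordering both rows and columns by nondecreasing cardinality (with ties broken arbitrarily) makes $M$ block upper triangular: whenever $|J|>|K|$ one has $J\not\subseteq K$, so $M_{J,K}=0$, while the diagonal blocks (fixed cardinality) are the identity because distinct equal-sized sets cannot contain one another. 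Hence $M$ is triangular with unit diagonal, so $\det M=1$ and $M$ has full rank.

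With this submatrix identified as $T(Q)_{1:(2^{k}-1)}$, the first claim of the proposition is settled. The second follows for free: $T(Q)$ has $2^{k}-1$ columns, and the column rank of any matrix is at least the column rank of any submatrix, so the existence of a full-rank $(2^{k}-1)\times(2^{k}-1)$ submatrix forces $T(Q)$ itself to have full column rank. I do not anticipate a serious obstacle; the only ingredient requiring a little care is the bookkeeping that identifies the chosen submatrix with the zeta matrix of $2^{\{1,\ldots,k\}}$, after which its invertibility is classical.
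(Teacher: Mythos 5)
Your proof is correct and follows essentially the same route as the paper: both take the $2^k-1$ rows of $T(Q)$ indexed by conjunctions of the $k$ single-attribute items guaranteed by completeness, order rows and columns by cardinality to obtain a block upper-triangular matrix with identity diagonal blocks, and conclude full column rank of $T(Q)$ from the resulting invertible square submatrix. Your identification of that submatrix as the zeta matrix of the Boolean lattice is just a cleaner packaging of the same computation.
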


\begin{pf}
Provided that $Q$ is complete, without loss of generality we assume
that the $i$th row vector of $Q$ is $e_i^\top$ for $i=1,\ldots,k$,
that is, item $i$ only requires attribute $i$ for each $i=1,\ldots,k$.
Let the first $2^k -1$ rows of $T(Q)$ be associated with
$\{I_1,\ldots,I_k\}$. In particular, we let the first $k$ rows
correspond to $I_1,\ldots,I_k$ and the first $k$ columns of $T(Q)$
correspond to $\mathbf A$'s that only have one attribute. We
further arrange the next $C^k_2$ 
rows of $T(Q)$ to correspond to combinations of two items, $I_i
\wedge I_j$, $i\neq j$. The next $C^k_2$ columns of $T(Q)$
correspond to $\mathbf A$'s that only have two positive attributes.
Similarly, we arrange $T(Q)$ for combinations of three, four, and up
to $k$ items. Therefore, the first $2^{k}-1$ rows of $T(Q)$ admit a
block upper triangle form. In addition, we are able to further
arrange the columns within each block such that the diagonal
matrices are identities, so that $T(Q)$ has form
%
\begin{equation}\label{A}
\begin{array}{c}
I_{1},I_{2},\ldots\\
I_{1}\wedge I_{2},I_{1}\wedge I_{3},\ldots\\
I_{1}\wedge I_{2}\wedge I_{3},\ldots\\
\vdots%
\end{array}
\pmatrix{
\mathcal I_{k} & \ast & \ast & \ast & \cdots& \cr
0 & \mathcal I_{C_{2}^{k}} & \ast & \ast & & \cr
0 & 0 & \mathcal I_{C_{3}^{k}} & \ast & & \cr
\vdots& \vdots& \vdots& & &
}.
\end{equation}
Note that $T(Q)$ has $2^k-1$ columns and $T(Q)_{1:(2^k-1)}$ obviously
has full rank, therefore $T(Q)$ has
full column rank.
\end{pf}

From now on, we assume that $Q_{1:k}=\mathcal I_k$ and the first
$2^{k}-1$ rows of $T(Q)$ are arranged in the order as in \eqref{A}.

\begin{proposition}\label{PropRankS}
Suppose that $Q$ is complete, $T(Q)$ is saturated, and $c\ncong
\mathbf0$. Then, $T_{c}(Q)$ and $T_c(Q)_{1:(2^k-1)}$ have full
column rank.
\end{proposition}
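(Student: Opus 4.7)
The plan is to bootstrap directly from Proposition \ref{PropRank} by observing that $T_c(Q)$ is obtained from $T(Q)$ by scaling each row by a nonzero constant whenever $c\ncong\mathbf 0$. Concretely, I would first arrange the rows and columns of $Q$ and $T(Q)$ exactly as in Proposition \ref{PropRank}, so that $T(Q)$ has full column rank and, in particular, $T(Q)_{1:(2^k-1)}$ is the block upper triangular matrix in \eqref{A} with identity blocks on the diagonal (hence nonsingular). I would then apply the same permutation of rows and columns to $T_c(Q)$, which is permissible because $T_c(Q)=D_c T(Q)$ and row permutations commute with left multiplication by a diagonal matrix (up to a corresponding permutation of $D_c$'s diagonal entries).

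Next, I would note that the hypothesis $c\ncong\mathbf 0$ means $c_i\ne 0$ for every $i=1,\dots,m$. The diagonal entry of $D_c$ attached to the row labeled $I_{i_1}\wedge\cdots\wedge I_{i_l}$ is the product $c_{i_1}\cdots c_{i_l}$, which is therefore nonzero. Hence $D_c$ is an invertible diagonal matrix, and for any subset of rows (in particular the first $2^k-1$ rows) the corresponding diagonal sub-block of $D_c$ is likewise invertible. Since left multiplication by an invertible matrix preserves column rank, we obtain
\[
\mathrm{rank}\,T_c(Q)=\mathrm{rank}\,T(Q),\qquad \mathrm{rank}\,T_c(Q)_{1:(2^k-1)}=\mathrm{rank}\,T(Q)_{1:(2^k-1)},
\]
and both are full by Proposition \ref{PropRank}.

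There is no real obstacle here: the statement is a direct corollary of Proposition \ref{PropRank} together with the observation that scaling rows by nonzero factors preserves rank. The only thing to be careful about is the bookkeeping of which diagonal entry of $D_c$ attaches to which row label, to confirm that the restriction of $D_c$ to the first $2^k-1$ rows is still a diagonal matrix with nonzero entries; this is immediate because each such entry is a product of some of the $c_i$'s, all of which are nonzero by hypothesis.
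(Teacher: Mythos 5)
Your proof is correct and is essentially the paper's own argument: the paper likewise derives the result from Proposition \ref{PropRank} together with the factorization $T_c(Q)=D_cT(Q)$ and the observation that $D_c$ is an invertible diagonal matrix when $c\ncong\mathbf 0$. Your version merely spells out the bookkeeping (which diagonal entry attaches to which row label) in more detail.
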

\begin{pf}
By Proposition~\ref{PropRank}, \eqref{As} and the fact that $D_c$ is
a diagonal matrix of full rank as long as $c\ncong\mathbf0$,
\[
T_c(Q) = D_c T(Q),
\]
is of full column rank.
\end{pf}

The following two propositions, which compare the column spaces of
$T_c(Q)$ and $T_c(Q')$, are central to the proof of all the
theorems. Their proofs are delayed to the \hyperref[app]{Appendix}.

The first proposition discusses the case where $Q'_{1:k}$ is
complete. We can always rearrange the columns of $Q'$ so that
$Q_{1:k}=Q'_{1:k}$. In addition, according to the proof of
Proposition~\ref{PropRank}, the last column vector of $T_c(Q)$
corresponds to attribute $\mathbf A=(1,\ldots,1)^\top$. Therefore, this
column vector is all of nonzero entries.

\begin{proposition}\label{PropComp}
Assume that $Q$ is a complete matrix and $T(Q)$ is saturated.
Without loss of generality, let $Q_{1:k} =\mathcal I_k$. Assume that
the first $k$ rows of $Q'$ form a complete matrix. Further, assume
that $Q_{1:k}=Q'_{1:k}=\mathcal I_k$. If $Q'\neq Q$ and $c\ncong
\mathbf0$, under the conditions in Theorem~\ref{CorProb},
$T_{c}(Q)\pp^{*}$
is not in the column space $C(T_{c'}(Q'))$ for all
$c'\in\mathbb R^m$.
\end{proposition}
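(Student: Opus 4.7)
The plan is to exploit the single row where $Q$ and $Q'$ differ via a well-chosen attribute profile. Since $Q \neq Q'$ but $Q_{1:k} = Q'_{1:k} = \mathcal{I}_k$, some row $i^{*} > k$ satisfies $q_{i^{*}} \neq q'_{i^{*}}$, and since both are nonzero binary vectors one of them fails to dominate the other. First I would set
$$\mathbf{A}_{0} = \begin{cases} q_{i^{*}}, & \text{if } q'_{i^{*}} \not\preceq q_{i^{*}}, \\ q'_{i^{*}}, & \text{otherwise (so that } q'_{i^{*}} \prec q_{i^{*}} \text{ and hence } q_{i^{*}} \not\preceq \mathbf{A}_{0}\text{)}, \end{cases}$$
and take $V$ to be the column of $T_{c}(Q)$ indexed by $\mathbf{A}_{0}$. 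By construction $\mathbf{A}_{0}$ is nonzero, dominates exactly one of $q_{i^{*}}, q'_{i^{*}}$, and depends only on the pair $(Q, Q')$, as required.

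Next I would argue by contradiction: assume some $c'$ realizes $V = T_{c'}(Q')\lambda$ and $V^{*} = T_{c'}(Q')\mu$ simultaneously. The key leverage is that, because $Q'_{1:k} = \mathcal{I}_{k}$, the row of $T_{c'}(Q')$ indexed by $\bigwedge_{j \in S} I_{j}$, for $S \subseteq \{1, \ldots, k\}$, has $\mathbf{A}$-th entry $\prod_{j \in S} c'_{j} \cdot \mathbf{1}(\mathbf{A} \succeq S)$. Equating these rows against the corresponding entries of $V$ and $V^{*}$ yields the partial-sum identities
$$\sum_{\mathbf{A} \succeq S} \lambda_{\mathbf{A}} \;=\; \mathbf{1}(S \preceq \mathbf{A}_{0}) \prod_{j \in S} r_{j}, \qquad \sum_{\mathbf{A} \succeq S} \mu_{\mathbf{A}} \;=\; \prod_{j \in S} r_{j}, \qquad r_{j} := c_{j}/c'_{j},$$
which (taken with singletons $S = \{j\}$) force $c'_{j} \neq 0$ for every $j \leq k$. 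Mobius inversion on the Boolean lattice of subsets of $\{1, \ldots, k\}$ then pins down the coefficient vectors uniquely:
$$\lambda_{\mathbf{A}} = \mathbf{1}(\mathbf{A} \preceq \mathbf{A}_{0}) \prod_{j \in \mathbf{A}} r_{j} \prod_{j \in \mathbf{A}_{0} \setminus \mathbf{A}} (1 - r_{j}), \qquad \mu_{\mathbf{A}} = \prod_{j \in \mathbf{A}} r_{j} \prod_{j \notin \mathbf{A}} (1 - r_{j}).$$

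The contradiction then emerges from the one remaining equation, at row $I_{i^{*}}$. For $V$ it reads $c_{i^{*}} \mathbf{1}(\mathbf{A}_{0} \succeq q_{i^{*}}) = c'_{i^{*}} \sum_{q'_{i^{*}} \preceq \mathbf{A} \preceq \mathbf{A}_{0}} \lambda_{\mathbf{A}}$, whose right-hand sum telescopes via $\sum_{T \subseteq U} \prod_{T} r \prod_{U \setminus T}(1-r) = 1$ to $\mathbf{1}(q'_{i^{*}} \preceq \mathbf{A}_{0}) \prod_{j \in q'_{i^{*}}} r_{j}$; the analogous identity for $V^{*}$ reads $c_{i^{*}} = c'_{i^{*}} \prod_{j \in q'_{i^{*}}} r_{j}$. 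In the case $\mathbf{A}_{0} = q_{i^{*}}$ the $V$-equation has nonzero left side but a vanishing right side (since $q'_{i^{*}} \not\preceq \mathbf{A}_{0}$), an immediate contradiction. In the case $\mathbf{A}_{0} = q'_{i^{*}}$ the $V$-equation has zero left side (since $q_{i^{*}} \not\preceq \mathbf{A}_{0}$) while the right side is $c'_{i^{*}} \prod_{j \in q'_{i^{*}}} r_{j}$ with every $r_{j} \neq 0$; this forces $c'_{i^{*}} = 0$, and then the $V^{*}$-equation gives $c_{i^{*}} = 0$, contradicting $c \ncong \mathbf{0}$.

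The main technical obstacle will be the Mobius-inversion step, namely justifying that the equations from the subset rows $\bigwedge_{j \in S} I_{j}$ with $S \subseteq \{1, \ldots, k\}$ genuinely pin down the full $(2^{k}-1)$-dimensional coefficient vectors $\lambda$ and $\mu$ without residual freedom. This rests on the block-triangular arrangement of rows and columns established in Proposition \ref{PropRank} together with the identity structure $Q_{1:k} = Q'_{1:k} = \mathcal{I}_{k}$. The telescoping identity that collapses the binomial-like sums over the Boolean lattice to clean products of ratios $r_{j}$ is the other essential ingredient; once both are in hand the row-$I_{i^{*}}$ case analysis above is short.
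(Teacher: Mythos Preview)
Your argument is correct, but it takes a substantially different and more computational route than the paper's.

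The paper's proof rests on a single structural observation: since $Q'_{1:k}=\mathcal I_k$, the row of $T(Q')$ corresponding to $I_{i^*}$ is \emph{identical} to the row corresponding to $\bigwedge_{j\in q'_{i^*}} I_j$ (call these rows $l$ and $h$), because both encode the indicator $\mathbf 1(\mathbf A\succeq q'_{i^*})$. Consequently, for any $c'$, rows $h$ and $l$ of $T_{c'}(Q')$ are proportional, so every vector in $C(T_{c'}(Q'))$ has its $h$-th and $l$-th entries either both zero or both nonzero. On the other hand, since $T(Q)_{1:(2^k-1)}=T(Q')_{1:(2^k-1)}$ one has $T(Q)_{h\cdot}=T(Q')_{l\cdot}$, and hence there is some column $i$ with $T(Q)_{hi}\neq T(Q)_{li}$; because $c\ncong\mathbf 0$, exactly one of the $h$-th and $l$-th entries of that column of $T_c(Q)$ is zero. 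This disposes of the case where rows $h,l$ of $T_{c'}(Q')$ are nonzero; if either row is the zero vector, $V^*$ (all of whose entries are nonzero) fails to lie in the column space. No Mobius inversion, no telescoping sums.

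Your approach instead fixes the attribute profile $\mathbf A_0$ as $q_{i^*}$ or $q'_{i^*}$, uses all $2^k-1$ subset-rows to determine the putative coefficient vectors $\lambda,\mu$ exactly via Mobius inversion, and then checks the single remaining row $I_{i^*}$ for a contradiction. This works, and the explicit formulas for $\lambda_{\mathbf A},\mu_{\mathbf A}$ are a nice by-product, but the machinery is heavier than necessary. In effect, the paper extracts the same contradiction by comparing just two rows ($h$ and $l$) rather than inverting a $(2^k-1)\times(2^k-1)$ system. Your column $\mathbf A_0$ is one specific choice among the columns the paper's argument would accept; the paper's ``two-identical-rows'' device shows more transparently why \emph{some} such column must fail.
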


The next proposition discusses the case where $Q'_{1:k}$ is
incomplete.

\begin{proposition}\label{PropIncomp}
Assume that $Q$ is a complete matrix and $T(Q)$ is saturated.
Without loss of generality, let $Q_{1:k} =\mathcal I_k$.
If $c\ncong\mathbf0$ and
$Q'_{1:k}$ is incomplete, under the conditions in Theorem~\ref
{CorProb}, $T_{c}(Q)\pp^{*}$
is not in the column space $C(T_{c'}(Q'))$
for all $c'\in R^m$.
\end{proposition}

The next result is a direct corollary of these two propositions. It
follows by setting $c_i=1$ and $g_i =0$ for all $i=1,\ldots,m$.

\begin{corollary}\label{CorRank}
If $Q\nsim Q'$, under the conditions of Theorem~\ref{CorProb},
$T_{c}(Q)\pp^{*}$
is not in the column space $C(T_{c'}(Q'))$
for all $c'\in[0,1]^m$.
\end{corollary}

To obtain a similar proposition for the cases where the $g_{i}$'s
are nonzero, we will need to expand the $T_{c,g}(Q)$ as follows. As
previously defined, let
%
\begin{equation}\label{tildeT}
\tilde{T}_{c,g}(Q)= \pmatrix{
\mathbf{g} & T_{c,g}(Q) \cr
1 & \mathbb{E}%
} .
\end{equation}
The last row of $\tilde{T}_{c,g}(Q)$ consists entirely of ones.
Vector $\mathbf g$ is defined as in \eqref{g}.

\begin{proposition}
\label{PropSG}Suppose that $Q$ is a complete matrix, $Q'\nsim Q$,
$T$ is saturated and $c\ncong g$. Let $\pp^{*}_{0}= (p^{*}_{\mathbf
0},(\pp^{*})^{\top})^{\top}$.
Under the conditions of Theorem~\ref{CorProb}, $\tilde T_{c,g}(Q)\pp^{*}_0$
is not in the column space $C(\tilde T_{c',g}(Q'))$
for all $c'\in[0,1]^m$.
In addition, $\tilde T_{c,g}(Q)$ is of full column rank.
\end{proposition}

To prove Proposition~\ref{PropSG}, we will need the following lemma.

\begin{lemma}
\label{LemColT}Consider two matrices $T_{1}$ and $T_{2}$ of the same
dimension. If $T_{1}\pp\in C(T_{2})$, then for any matrix $D$
of appropriate dimension for multiplication, we have%
\[
DT_{1} \pp\in C(DT_{2}).
\]

Conversely, if for some $D$, $DT_{1}\pp$ does not belong
to $C(DT_{2})$, then $T_{1}\pp$ does not belong to $
C(T_{2})$.
\end{lemma}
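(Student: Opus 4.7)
The plan is to handle both directions by expressing column-containment as matrix factorization. For the forward direction, I would first observe that $C(T_1) \subseteq C(T_2)$ is equivalent to the existence of a matrix $M$ such that $T_1 = T_2 M$: each column of $T_1$ lies in $C(T_2)$, so each column can be written as a linear combination of the columns of $T_2$, and stacking the coefficient vectors as columns of $M$ yields the factorization. Once this is in hand, left-multiplying by $D$ gives $DT_1 = D(T_2 M) = (DT_2)M$, which exhibits every column of $DT_1$ as a linear combination of columns of $DT_2$, hence $C(DT_1) \subseteq C(DT_2)$.

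For the converse (which is really the contrapositive of a column-wise version of the same statement), I would argue directly: the hypothesis that the $l$-th column of $T_1$ belongs to $C(T_2)$ means there exists a vector $v$ with $T_1 e_l = T_2 v$. Left-multiplying by $D$ gives $DT_1 e_l = DT_2 v$, so the $l$-th column of $DT_1$ lies in $C(DT_2)$. Contrapositively, if the $l$-th column of $DT_1$ does not lie in $C(DT_2)$, the $l$-th column of $T_1$ cannot lie in $C(T_2)$.

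There is no real obstacle here — the lemma is a textbook linear-algebra statement, and the only subtlety is being careful that the dimensions of $D$, $T_1$, $T_2$ match up so that the products $DT_1$, $DT_2$, and $T_2 M$ are well defined. I would mention this compatibility explicitly once and otherwise keep the argument to a few lines.
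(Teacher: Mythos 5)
Your proof is correct and is simply the worked-out version of the paper's one-line argument ("$DT_i$ is a linear row transform of $T_i$; the conclusion is immediate by basic linear algebra"): the factorization $T_1=T_2M$ giving $DT_1=(DT_2)M$, and the contrapositive column-wise argument, are exactly the basic linear algebra being invoked. No substantive difference in approach.
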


\begin{pf}
Note that $DT_i$ is just a linear row transform of $T_i$ for
$i=1,2$. The conclusion is immediate by basic linear algebra.
\end{pf}

\begin{pf*}{Proof of Proposition~\ref{PropSG}}
Thanks to Lemma~\ref{LemColT}, we only need to find a matrix $D$
such that
$D\tilde{T}_{c,g}(Q) \pp^{*}_{0}$ does not belong to the column space
of $D\tilde{T}_{c^{\prime},g}(Q^{\prime})$ for all $c^{\prime}\in
\lbrack0,1]^{m}$.

We define
\begin{eqnarray*}
{c-g}&=&(c_{1}-g_{1},\ldots,c_{m}-g_{m}),
\\
 c^{\prime}-{g}&=&(c_{1}^{\prime
}-g_{1},\ldots,c_{m}^{\prime}-g_{m}).
\end{eqnarray*}
We claim that there exists a matrix $D$ such that%
\[
D\tilde{T}_{c,g}(Q)= (
0,  T_{{c-g}}(Q)%
)
\]
and
\[
D\tilde{T}_{c^{\prime},g}(Q^{\prime})= (
0,  T_{c^{\prime}-{g}}(Q^{\prime})%
) ,
\]
where the choice of $D$ does not depends on $c$ or $c^{\prime}$. In
the rest of the proof, we construct such a $D$-matrix for
$\tilde{T}_{c,g}(Q)$. The verification for $\tilde{T}_{c^{\prime
},g}(Q^{\prime})$ is completely analogous. Note that each row in
$D\tilde{T}_{c,g}(Q)$ is just a linear combination of rows of
$\tilde{T}_{c,g}(Q)$. Therefore, it suffices
to show that every row vector of the form%
\[
\bigl(0,B_{{c-g},Q}(I_{i_{1}}\wedge\cdots\wedge I_{i_{l}})\bigr)
\]
can be written as a linear combination of the row vectors of
$\tilde{T} _{c,g}(Q)$. We prove this by induction. First note that
for each $1\leq i\leq m$,
%
\begin{equation}
\label{single}
(0,B_{{c-g},Q}(I_{i}))=(c_{i}-g_{i})(0,B_{Q}(I_{i}))=(g_{i},B_{c,g,Q}(I_{i}))-g_{i}\mathbb
{E}.
\end{equation}
Suppose that all rows of the form
\[
\bigl(0,B_{{c-g},Q}(I_{i_{1}}\wedge\cdots\wedge I_{i_{l}})\bigr)
\]
for all $1\leq l\leq j$ can be written as linear combinations of the
row vectors of $\tilde{T}_{c,g}(Q)$ with coefficients only depending
on $g_{1},\ldots,g_{m}$. Thanks to \eqref{single}, the case of $j=1$
holds. Suppose the statement holds for some general $j$. We consider
the case of $j+1$. By definition,
%
\begin{eqnarray}\label{BSG}
\bigl(g_{i_1}\ldots g_{i_{j+1}}, B_{c,g,Q}(I_{i_{1}}\wedge\cdots\wedge
I_{i_{j+1}}) \bigr)&=&\Upsilon
_{h=1}^{j+1} (g_{i_h},B_{c,g,Q}(I_{i_{h}}) )\nonumber
\\[-8pt]
\\[-8pt]
&=&\Upsilon_{h=1}^{j+1} \bigl(g_{i_h}\mathbb
E+(0,B_{{c-g},Q}(I_{i_h})) \bigr). \nonumber
\end{eqnarray}
Let ``$\ast$'' denote element-by-element multiplication. For every
generic vector $V'$ of appropriate length,
\[
\mathbb{E}\ast V'=V'.
\]
We expand the right-hand side of \eqref{BSG}. The last term would be
\[
\bigl( 0, B_{{c-g},Q}(I_{i_{1}}\wedge\cdots\wedge
I_{i_{j+1}}) \bigr)=\Upsilon_{h=1}^{j+1} ( 0,
B_{{c-g},Q}(I_{i_{h}}) ).
\]
From the induction assumption and definition \eqref{bs}, the other
terms on both sides of \eqref{BSG} belong to the row space of
$\tilde{T}_{c,g}(Q)$. Therefore, $( 0, B_{{c-g},Q}(I_{i_{1}}\wedge
\cdots\wedge I_{i_{j+1}}))$ is also in the row space of
$\tilde{T}_{c,g}(Q)$. In addition, all the corresponding
coefficients only consist of $g_{i}$. Therefore, one can construct a
$(2^m-1) \times2^m$ matrix $D$ such that
\[
D\tilde{T}_{c,g}(Q)= (
0,  T_{{c-g}}(Q)%
) .
\]
Because $D$ is free of $c$ and $Q$, we have%
\[
D\tilde{T}_{c^{\prime},g}(Q^{\prime})= (
0,  T_{c^{\prime}-g}(Q^{\prime})%
) .
\]
In addition, thanks to Propositions~\ref{PropComp} and~\ref{PropIncomp}, $D\tilde T_{c,g}(Q)\pp^{*}_0=T_{c-g}(Q)\pp^{*}$
is not in the column space $C(T_{c'-g}(Q'))=C(D\tilde T_{c',g}(Q'))$
for all $c'\in[0,1]^m$. Therefore, by Lemma~\ref{LemColT},
$\tilde T_{c,g}(Q)\pp^{*}_0$
is not in the column space $C(\tilde T_{c',g}(Q'))$
for all $c'\in[0,1]^m$.

In addition,
\[
\pmatrix{
D \cr
e_{2^{m}}^\top%
} \tilde{T}_{c,g}(Q)
\]
is of full column rank, where $e^\top_{2^m}$ is a $2^m$ dimension
row vector with last element being one and rest being zero.
Therefore, $\tilde{T}_{c,g}(Q)$ is also of full column rank.
\end{pf*}

\subsection{Proof of the theorems}

Using the results of the previous propositions and lemmas, we now
proceed to prove our theorems.

\begin{pf*}{Proof of Theorem~\ref{ThmVan}}
Consider $Q'\nsim Q$ and $T(\cdot)$ saturated.
Recall that $\hat{\mathbf p}$ is the
vector containing $\hat p_{\mathbf A}$'s with $\mathbf A \ncong
\mathbf0$, where
\[
\hat p_{\mathbf A} = \frac1 N\sum_{r=1}^N \mathbf1(\mathbf A_r
=\mathbf A).
\]
For any $\mathbf p^*\succ\mathbf0$, since $\hat{\mathbf
p}\rightarrow\mathbf p^*$ almost surely, according to
Corollary~\ref{CorRank}, $\alpha= T(Q)\hat{\mathbf
p}$ by \eqref{id}, and $T(Q)\mathbf p^* \notin C(T(Q'))$, there exists
$ \delta
>0$ such that,
\[
\lim_{N\rightarrow\infty}P \Bigl(\inf_{\mathbf p\in[0,1]^{2^k-1}}
|T(Q')\mathbf p - \alpha| > \delta \Bigr) =1
\]
and
\[
P \Bigl(\inf_{\mathbf p\in[0,1]^{2^k-1}} |T(Q)\mathbf p - \alpha|=0 \Bigr)=1.
\]
Given that there are finitely many $m\times k$ binary matrices,
$P(\hat Q\sim Q) \rightarrow1$ as $N\rightarrow\infty$. In fact,
we can arrange the columns of $\hat Q$ such that $P(\hat Q =
Q)\rightarrow1$ as $N\rightarrow\infty$.

Note that $\hat{\mathbf p}$ satisfies the identity
\[
T(Q)\hat{\mathbf p} = \alpha.
\]
In addition, since $T(Q)$ is of full rank (Proposition~\ref{PropRank}), the solution to the above linear equation is
unique. Therefore, the solution to the optimization problem
$\inf_{\mathbf p} |T(Q)\mathbf p - \alpha|$ is unique and is~$\hat{\mathbf p}$. Notice that when $\hat Q =Q$, $\tilde{\mathbf p}
=\arg\inf_{\mathbf p} |T(\hat Q) \mathbf p - \alpha| = \hat
{\mathbf p}$. Therefore,
\[
\lim_{N\rightarrow\infty}P(\tilde{\mathbf p} = \hat{\mathbf p} )=1.
\]
Together with the consistency of $\hat{\mathbf p}$, the conclusion
of the theorem follows immediately.
\end{pf*}

\begin{pf*}{Proof of Theorem~\ref{ThmProb}}
Note that for all $Q'$%
\[
T_{c,g}(Q')\mathbf p+p_{\mathbf{0}}\mathbf{g}-\alpha=(\mathbf
g,T_{c,g}(Q' )) \pmatrix{
p_{\mathbf{0}}  \cr
\mathbf p%
} -\alpha.
\]
By the law of large numbers,
\[
|T_{c,g}(Q)\mathbf p^{\ast}+p_{\mathbf{0}}^{\ast}\mathbf{g}-\alpha
|= \biggl\vert(\mathbf g,T_{c,g}(Q)) \pmatrix{
p_{\mathbf{0}}^{\ast} \vspace*{2pt}\cr
\mathbf p^{\ast}%
} -\alpha \biggr\vert\rightarrow0
\]
almost surely as $N\rightarrow\infty$. Therefore,
\[
S_{c,g}(Q)\rightarrow0
\]
almost surely as $N\rightarrow\infty$.

For any $Q^{\prime}\nsim Q$, note that
\[
\pmatrix{
\alpha \cr
1%
} \rightarrow\tilde{T}_{c,g}(Q) \pmatrix{
p_{\mathbf{0}}^{\ast} \vspace*{2pt}\cr
\mathbf{p}^{\ast}%
} .
\]
According to Proposition~\ref%
{PropSG} and the fact that $\mathbf{p}^{\ast}\succ\mathbf{0}$,
there exists $\delta(c^{\prime})>0$ such that $\delta(c^{\prime
})$ is continuous in $c^{\prime}$ and
\[
\inf_{\mathbf{p},p_{\mathbf{0}}} \biggl\vert\tilde{T}_{c^{\prime
},g}(Q^{\prime}) \pmatrix{
p_{\mathbf{0}} \cr
\mathbf{p}%
} -\tilde{T}_{c,g}(Q) \pmatrix{
p_{\mathbf{0}}^{\ast} \vspace*{2pt}\cr
\mathbf{p}^{\ast}%
} \biggr\vert>\delta(c^{\prime}).
\]
By elementary calculus,
\[
\delta\triangleq\inf_{c^{\prime}\in\lbrack0,1]^{m}}\delta
(c^{\prime})>0
\]
and%
\[
\inf_{c^{\prime},\mathbf{p},p_{\mathbf{0}}} \biggl\vert
\tilde{T}_{c^{\prime},g}(Q^{\prime}) \pmatrix{
p_{\mathbf{0}} \cr
\mathbf{p}%
} -\tilde{T}_{c,g}(Q) \pmatrix{
p_{\mathbf{0}}^{\ast} \vspace*{2pt}\cr
\mathbf{p}^{\ast}%
} \biggr\vert>\delta.
\]
Therefore,
\[
P \biggl( \inf_{c^{\prime},\mathbf{p},p_{\mathbf{0}}} \biggl\vert\tilde{T}%
_{c^{\prime},g}(Q^{\prime}) \pmatrix{
p_{\mathbf{0}} \cr
\mathbf{p}%
} - \pmatrix{
\alpha \cr
1%
} \biggr\vert>\delta/2 \biggr) \rightarrow1,
\]
as $N\rightarrow\infty$. For the same $\delta$, we have%
\[
P \biggl( \inf_{c^{\prime},\mathbf{p},p_{\mathbf{0}}} \biggl\vert(\mathbf{g}%
,T_{c^{\prime},g}(Q^{\prime})) \pmatrix{
p_{\mathbf{0}} \cr
\mathbf{p}%
} -\alpha \biggr\vert>\delta/2 \biggr)=
P\Bigl(\inf_{c'}S_{c',g}(Q')> \delta/2\Bigr) \rightarrow1.
\]
The above minimization on the left of the equation is subject to the
constraint that
\[
\sum_{\mathbf{A}\in\{0,1\}^{k}}p_{\mathbf{A}}=1.
\]
Together with the fact that there are only finitely many $m\times k$
binary matrices, we have
\[
P\bigl(\hat{Q}(c,g)\sim Q\bigr)=1.
\]
We arrange the columns of $\hat{Q}(c,g)$ so that $P(\hat{Q}%
(c,g)=Q)\rightarrow1$ as $N\rightarrow\infty$.

Now we proceed to the proof of consistency for
$\tilde{\mathbf{p}}(c,g)$. Note
that%
\begin{eqnarray*}
\biggl\vert\tilde{T}_{c,g}(\hat Q(c,g)) \pmatrix{
\tilde{p}_{\mathbf{0}}(c,g) \cr
\tilde{\mathbf{p}}(c,g)%
} - \pmatrix{
\alpha\cr
1%
} \biggr\vert
&\mathop{\rightarrow}\limits^{p}&0,
\\
\biggl\vert\tilde{T}_{c,g}(Q) \pmatrix{
p_{\mathbf{0}}^{\ast} \vspace*{2pt}\cr
\mathbf p^{\ast}%
} - \pmatrix{
\alpha\cr
1%
} \biggr\vert
&\mathop{\rightarrow}\limits^{p}&0.
\end{eqnarray*}
Since $\tilde{T}_{c,g}(Q)$ is a full column rank matrix and $P(\hat
Q(c,g)=Q)\rightarrow1$, $\tilde{\mathbf p}(c,g)\rightarrow\mathbf
p^{\ast}$ in probability.
\end{pf*}

\begin{pf*}{Proof of Theorem~\ref{CorProb}}
Assuming $g$ is known, note that
\[
\inf_{p_{\mathbf{0}},\mathbf p} \biggl\vert\tilde T_{c,g}(Q) \pmatrix{
p_{\mathbf{0}} \cr
\mathbf p
} - \pmatrix{
\alpha \cr
1%
} \biggr\vert
\]
is a continuous function of $c$. According to the results of
Proposition~\ref{PropSHat}, the definition in \eqref{EstSGen}, and
the definition of $\hat c$ in Section~\ref{SecEst}, we obtain that
\[
\inf_{p_{\mathbf{0}},\mathbf p} \biggl\vert\tilde T_{\hat{c}
(Q,g),g}(Q) \pmatrix{
p_{\mathbf{0}} \cr
\mathbf p
} - \pmatrix{
\alpha \cr
1%
} \biggr\vert\rightarrow0,
\]
in probability as $N\rightarrow\infty$. In addition, thanks to
Proposition~\ref{PropSG} and with a similar argument as in the proof
of Theorem~\ref{ThmProb}, $\hat{Q}_{\hat c}(g)$ is a consistent
estimator.

Furthermore, if $\tilde c (Q,g)$ is a consistent estimator, then
$\hat c(Q,g)$ is also consistent. Then, the consistency of
$\tilde{\mathbf p}_{\hat c}(g)$ follows from the facts that $\hat
Q_{\hat c}(g)$ is consistent and $\tilde T_{\hat c,g} (Q)$ is of
full column rank.
\end{pf*}

\begin{appendix}\label{app}
\section*{Appendix: Technical proofs}\vspace*{-12pt}\label{SecTech}
\begin{pf*}{Proof of Proposition~\ref{PropComp}}
Note that $Q_{1:k}=Q_{1:k}^{\prime}=\mathcal{I}_{k}$. Let $T(\cdot
)$ be arranged as in \eqref{A}. Then,
$T(Q)_{1:(2^{k}-1)}=T(Q^{\prime})_{1:(2^{k}-1)}$. Given that $Q\neq
Q^{\prime}$, we have $T(Q)\neq T(Q^{\prime})$. We assume that
$T(Q)_{li}\neq T(Q')_{li}$, where $T(Q)_{li}$ is the entry in the
$l$th row and $i$th column. Since $ T(Q)_{1:(2^{k}-1)}=T(Q^{\prime
})_{1:(2^{k}-1)}$, it is necessary that $l\geq2^{k}$.

Suppose that the $l$th row of the $T(Q^{\prime})$ corresponds to
an item
that requires attributes $i_{1},\ldots,i_{l^{\prime}}$. Then, we
consider $%
1\leq h\leq2^{k}-1$, such that the $h$th row of $T(Q^{\prime})$ is $%
B_{Q^{\prime}}(I_{i_{1}}\wedge\cdots\wedge I_{i_{l^{\prime}}})$.
Then, the $h $th row vector and the $l$th row vector of
$T(Q^{\prime})$ are identical.

Since $T(Q)_{1:(2^{k}-1)}=T(Q^{\prime})_{1:(2^{k}-1)}$, we have $%
T(Q)_{hj}=T(Q^{\prime})_{hj}=T(Q^{\prime})_{lj}$ for
$j=1,\ldots,\allowbreak 2^{k}-1$. If $T(Q)_{li}=0$ and $T(Q^{\prime})_{li}=1$, the
matrices $T(Q)$ and $T(Q^{\prime})$ look like
\begin{eqnarray*}
&&%
\begin{array}{ccccccccccccccccccc}
&&&&&&&&&&&&&&&&&&\begin{array}{ccccc}
&&&&\mbox{column }i \\
&&&&\downarrow \quad
\end{array}
\end{array}
\\
T(Q^{\prime }) &=&%
\begin{array}{c}
\\
\\
\mbox{row }h\rightarrow
\\
\\
\\[3pt]
\mbox{row }l\rightarrow
\end{array}%
\left(
\begin{array}{c@{\quad}c@{\quad}c@{\quad}c@{\quad}c}
\mathcal{I} & \ast  & \ldots  & \ast  & \ldots  \\
\vdots  & \vdots  &  & \ldots  & \ldots  \\
\vdots  & \vdots  & \mathcal{I} & \ldots  & \ldots  \\
\vdots  & \vdots  & \vdots  &  &  \\
\ast  & 1 & \ast  &  &  \\
\ast  & \ast  & \ast  &  &
\end{array}%
\right)
\end{eqnarray*}%
and
\begin{eqnarray*}
&&%
\begin{array}{ccccccccccccccccccc}
&&&&&&&&&&&&&&&&&&\begin{array}{ccccc}
&&&&\mbox{column }i \\
&&&&\downarrow \quad
\end{array}
\end{array}
\\
T(Q) &=&%
\begin{array}{c}
\\
\\
\mbox{row }h\rightarrow
\\
\\
\\[3pt]
\mbox{row }l\rightarrow  \\
\end{array}%
\left(
\begin{array}{c@{\quad}c@{\quad}c@{\quad}c@{\quad}c}
\mathcal{I} & \ast  & \ldots  & \ast  & \ldots  \\
\vdots  & \vdots  &  & \ldots  & \ldots  \\
\vdots  & \vdots  & \mathcal{I} & \ldots  & \ldots  \\
\vdots  & \vdots  & \vdots  &  &  \\
\ast  & 0 & \ast  &  &  \\
\ast  & \ast  & \ast  &  &
\end{array}%
\right) .
\end{eqnarray*}
\begin{longlist}[Case 2]
\item[Case 1.] Either the $h$th or $l$th row vector of $T_{c'}(Q')$
is a zero vector.
The conclusion is immediate because all the entries of $T_{c}(Q)\pp
^{*}$ are nonzero.

\item[Case 2.] The $h$th and $l$th row vectors of $T_{c'}(Q')$ are
nonzero vectors.
Suppose that the $l$th row corresponds to an item. There are three
different situations: according to the true $Q$-matrix (a)~the item in
row $l$ requires strictly more attributes than row $h$, (b) the item in
row $l$ requires strictly fewer attributes than row $h$, (c) otherwise.
We consider these three situations, respectively.
\begin{enumerate}[(b)]
\item[(a)] Under the true $Q$-matrix, there are two types of
sub-populations in consideration: people who are able to answer item(s)
in row $h$ ($p_{1}$) only and people who are able to answer items in
both row $h$ and row $l$ ($p_{2}$). Then, the sub-matrix of $T_{c}(Q)$
and $T_{c'}(Q)$ are like
\begin{center}
\begin{tabular}{l@{\quad}l@{\quad}l}
\multicolumn{3}{c}{\hspace*{26.5pt}$T_{c}(Q)$}  \\
& $p_{1}$ & $p_{2}$   \\
row $h$ & $c_{h}$ & $c_{h}$  \\
row $l$ & $0$ & $c_{l}$  
\end{tabular}
\quad
\begin{tabular}{l@{\quad}l@{\quad}l}
\multicolumn{3}{c}{\hspace*{31.5pt}$T_{c^{\prime}}(Q^{\prime})$}  \\
& $p_{1}$ & $p_{2}$  \\
row $h$ & $c_{h}^{\prime}$ & $c_{h}^{\prime}$  \\[2pt]
row $l$ & $c_{l}^{\prime}$ & $c_{l}^{\prime}$ 
\end{tabular}
\end{center}
We now claim that $c_{l}$ and $c_{l}'$ must be equal (otherwise the
conclusion hold) for the following reason.

Consider the following two rows of $T(Q)$: row A corresponding to the
combination that contains all the items; row B corresponding to the row
that contains all the items except for the one in row $l$.

Rows A and B are in fact identical in $T(Q)$. This is because all the
attributes are used at least twice (condition C5). Then, the attributes
in row $l$ are also required by some other item(s) and rows A and B
require the same combination of items. Thus, the corresponding entries
of all the column vectors of $T_{c}(Q)$ are different by a factor of $c_{l}$.

For $T(Q')$, rows A and B are also identical. This is because row $h$
and row $l$ have identical attribute requirements. Then, thus, the
corresponding entries of all the column vectors of $T_{c'}(Q)$ are
different by a factor of $c_{l}'$. Thus, $c'_{l}$ and $c_{l}$ must be
identical otherwise $T_{c}(Q) \pp^{*}$ is not in the column space of
$T_{c'}(Q)$.

Similarly, we obtain that $c_{h} =c'_{h}$. Given that $c_{h}= c_{h}' $
and $c_{l} = c_{l}'$, we now consider row $h$ and row $l$. Notice that
all the column vectors in $T_{c'}(Q')$ have their entries in row $h$
and row $l$ different by a factor of $c_{h}/c_{l}$. On the other hand,
the $h$ and $l$th entries of $T_{c}(Q)\pp^{*}$ are NOT different by a
factor of $c_{h}/c_{l}$ as long as the proportion of $p_{1}$ is
positive. Thereby, we conclude this case.


\item[(b)] Consider the following two types of sub-populations: people
who are able to answer item(s) in row $l$ ($p_{1}$) only and people who
are able to answer items in both row $h$ and row $l$ ($p_{2}$). Similar
to the analysis of (a), the sub-matrices look like:
\begin{center}
\begin{tabular}{l@{\quad}l@{\quad}l}
\multicolumn{3}{c}{\hspace*{26.5pt}$T_{c}(Q)$} \\
& $p_{1}$ & $p_{2}$  \\
row $h$ & $0$ & $c_{h}$  \\
row $l$ & $c_{l}$ & $c_{l}$ 
\end{tabular}
\quad
\begin{tabular}{l@{\quad}l@{\quad}l}
\multicolumn{3}{c}{\hspace*{31.5pt}$T_{c^{\prime}}(Q^{\prime})$} \\
& $p_{1}$ & $p_{2}$  \\
row $h$ & $0$ & $c_{h}^{\prime}$  \\[2pt]
row $l$ & $0$ & $c_{l}^{\prime}$  
\end{tabular}
\end{center}
With exactly the same argument as in (a), we conclude that $c_{j} =
c'_{j}$, $c_{h}=c'_{h}$, and further $T_{c}(Q)\pp^{*}$ is not in the
column space of $T_{c'}(Q')$.

\item[(c)] Consider the following three types of sub-populations:
people who are able to answer item(s) in row $l$ only ($p_{1}$), people
who are able to answer item(s) in row $h$ only ($p_{2}$), and people
who are able to answer items in both row $h$ and row $l$ ($p_{3}$).
The sub-matrices look like:
\begin{center}
\begin{tabular}{l@{\quad}l@{\quad}l@{\quad}l}
\multicolumn{4}{c}{\hspace*{14pt}$T_{c}(Q)$} \\
& $p_{1}$ & $p_{2}$ & $p_{3}$  \\
row $h$ & $0$ & $c_{h}$ & $c_{h}$ \\
row $l$ & $c_{l}$ & $0$ & $c_{l}$ \\
row $l\wedge h$ & $0$ & $0$ & $c_{h}c_{l}$   
\end{tabular}
\quad
\begin{tabular}{l@{\quad}l@{\quad}l@{\quad}l}
\multicolumn{4}{c}{\hspace*{13pt}$T_{c^{\prime}}(Q^{\prime})$}\\
& $p_{1}$ & $p_{2}$ & $p_{3}$  \\
row $h$ & $0$ & $c_{h}^{\prime}$ & $c_{h}^{\prime}$ \\[2pt]
row $l$ & $0$ & $c_{l}^{\prime}$ & $c_{l}^{\prime}$ \\[2pt]
row $l\wedge h$ & $0$ & $c_{h}^{\prime}c_{l}^{\prime}$ &
$c_{h}^{\prime
}c_{l}^{\prime}$  
\end{tabular}
\end{center}
With the same argument, we have that $c_{l} = c'_{l}$ and
$c_{h}=c'_{h}$. On considering row $h$ and row $l\wedge h$, we conclude
that $T_{c}(Q)\pp^{*}$ is not in the column space of $T_{c'}(Q')$.
\qed
\end{enumerate}
%
%
%
%
\end{longlist}
\noqed
\end{pf*}

\begin{pf*}{Proof of Proposition~\ref{PropIncomp}}
$T(\cdot)$ is arranged as in \eqref{A}.
%
%
Consider $Q'$ such
that $Q'_{1:k}$ is incomplete. We discuss the following situations.
\begin{enumerate}
\item There are two row vectors, say the $h$th and $l$th row vectors
($1\leq i,j \leq
k$), in $Q_{1:k}^{\prime}$ that are identical. Equivalently, two
items require exactly the same attributes according to $Q'$. With
exactly the same argument as in the previous proof, under condition C5,
we have that $c_{h} = c'_{h}$ and $c_{l} = c_{l}'$. We now consider the
rows corresponding to $l$ and $l\wedge h$. Note that the elements
corresponding to row $l$ and row $l\wedge h$ for all the vectors in the
column space of $T_{c'}(Q')$ are different by a factor of $c_{h}$.
However, the corresponding elements in the vector $T_{c}(Q)\pp^{*}$
are NOT different by a factor of $c_{h}$ as long as the population is
fully diversified.

%

\item No two row vectors in $Q_{1:k}^{\prime}$ are
identical. Then, among the first $k$ rows of $Q^{\prime}$ there is
at least one row vector containing two or more nonzero entries.
That is, there exists $1\leq i\leq k$ such that
\[
\sum_{j=1}^{k}Q_{ij}^{\prime}>1.
\]
This is because if each of the first $k$ items requires only one
attribute and $Q_{1:k}^{\prime}$ is not complete, there are at
least two items that require the same attribute. Then, there are two
identical row vectors in $ Q_{1:k}^{\prime}$ and it belongs to the
first situation. We define
\[
a_{i}=\sum_{j=1}^{k}Q_{ij}^{\prime},
\]
the number of attributes required by item $i$ according to $Q'$.

Without loss of generality, assume $a_{i}>1$ for $i=1,\ldots,n$ and
$a_{i}=1$ for $i=n+1,\ldots,k$. Equivalently, among the first $k$
items, only the first $n$ items require more than one attribute
while the $(n+1)$th through the $k$th items require only one
attribute each, all of which are distinct. Without loss of
generality, we assume $Q_{ii}^{\prime}=1$ for $i=n+1,\ldots,k$ and
$Q_{ij}=0$ for $i=n+1,\ldots,k$ and $i\neq j$.
\begin{enumerate}[(b)]
\item[(a)]\label{a} $n=1$. Since $a_{1}>1$, there exists an $l>1$ such
that $Q'_{1l}=1$. We now consider rows $1$ and $l$. With the same
argument as before (i.e., the attribute required by row $l$ is also
required by item 1 in $Q'$), we have that $c_{l} = c_{l}'$ (be careful
that we cannot claim that $c_{1} = c_{1}'$). We now consider the rows 1
and $1\wedge l$. Note that in $T_{c'}(Q')$ these two rows are different
by a factor of $c_{l}$; while the corresponding entries in $T_{c}(Q)\pp
^{*}$ are NOT different by a factor of $c_{l}$. Thereby, we conclude
the result in this situation.


\item[(b)]$n>1$ and there exists $j>n$ and $i\leq n$ such that $Q'_{ij}=1$.
The argument is identical to that in (2a).


\item[(c)]$n>1$ and for each $j>n$ and $i\leq n$, $Q'_{ij}=0$. Let the
$i^{\ast}$th
row in $T(Q')$ correspond to $I_{1}\wedge\cdots\wedge I_{n}$. Let
the $i_{h}^{\ast}$th row in $T(Q')$ correspond to $I_{1}\wedge
\cdots\wedge I_{h-1}\wedge I_{h+1}\wedge\cdots\wedge I_{n}$ for
$h=1,\ldots,n$.

We claim that there exists an $h$ such that the $i^{\ast}$th row and
the $%
i_{h}^{\ast}$th row are identical in $T(Q^{\prime})$, that is%
%
\setcounter{equation}{0}
\begin{equation}\label{n1}
B_{Q^{\prime}}(I_{1}\wedge\cdots\wedge I_{h-1}\wedge I_{h+1}\wedge
\cdots\wedge I_{n})=B_{Q^{\prime}}(I_{1}\wedge\cdots\wedge I_{n}).
\end{equation}
If the above claim is true, then the attributes required by item $h$
have been required by some other items. Then, we conclude that $c_{h}$
and $c'_{h}$ must be identical. In addition, rows in $T_{c'}(Q')$
corresponding to $I_{1}\wedge\cdots\wedge I_{h-1}\wedge
I_{h+1}\wedge\cdots\wedge I_{n}$ and $I_{1}\wedge\cdots\wedge
I_{n}$ are different by a factor of $c_{h}$. On the other hand, the
corresponding entries in $T_{c}(Q)\pp^{*}$ are NOT different by a
factor of $c_{h}$. Then, we are able to conclude the results for all
the cases.

In what follows, we prove the claim in \eqref{n1} by contradiction.
Suppose that there does not
exist such an $h$. This is equivalent to saying that for each $j\leq
n$ there exists an $\alpha_{j}$ such that $Q'_{j\alpha_{j}}=1$ and
$Q'_{i\alpha_{j}}=0$ for all $1\leq i\leq n$ and $i\neq j$.
Equivalently, for each $j\leq n$, item $j$ requires at least one
attribute that is not required by other first $n$ items. Consider
\[
\mathcal{C}_{i}=\{j\dvtx \mbox{there exists $i\leq i' \leq n$ such that
$Q'_{i' j}=1$}\}.
\]
Let $\#(\cdot)$ denote the cardinality of a set. Since for each
$i\leq n$ and $j>n$, $Q'_{ij}=0$, we have that
$\#(\mathcal{C}_{1})\leq n$. Note that $Q'_{1\alpha_{1}}=1$ and
$Q'_{i\alpha_{1}}=0$ for all\vspace*{1.5pt} $2\leq i\leq n$. Therefore, $\alpha
_{1}\in\mathcal{C}_{1}$ and $\alpha_1 \notin\mathcal{C}_{2}$.
Therefore, $\#(\mathcal{C}_{2})\leq n-1$. By a similar argument and
induction, we have that $a_{n}=\#(\mathcal{C}_{n})\leq1 $. This
contradicts the fact that $a_{n}>1$. Therefore, there exists an $h$
such that \eqref{n1} is true. As for $T(Q)$, we have that
\[
B_{Q}(I_{1}\wedge\cdots\wedge I_{h-1}\wedge I_{h+1}\wedge
\cdots\wedge I_{n})\neq B_{Q}(I_{1}\wedge\cdots\wedge I_{n}).
\]
\end{enumerate}

\end{enumerate}

Summarizing the cases in 1, (2a), (2b) and (2c), we conclude the proof.
\end{pf*}
\end{appendix}

\section*{Acknowledgements}
 This research was supported in part by
Grants NSF CMMI-1069064, SES-1123698, Institute of Education Sciences
R305D100017 and NIH R37 GM047845.



\printhistory

\end{document}